\newcommand{\Z}{\ensuremath{\mathbb{Z}}}
\newcommand{\Q}{\ensuremath{\mathbb{Q}}}
\newcommand{\F}{\ensuremath{\mathbb{F}}}
\DeclareMathOperator{\Ext}{Ext}
\DeclareMathOperator{\GL}{GL}
\DeclareMathOperator{\PGL}{PGL}
\DeclareMathOperator{\SL}{SL}
\DeclareMathOperator{\Ad}{Ad}
\DeclareMathOperator{\Cores}{Cor}
\DeclareMathOperator{\Res}{Res}
\DeclareMathOperator{\inv}{inv}
\DeclareMathOperator{\sm}{sm}
\DeclareMathOperator{\sph}{sph}
\DeclareMathOperator{\HH}{H}
\newcommand{\into}{\hookrightarrow}
\newcommand{\onto}{\twoheadrightarrow}
\newcommand{\too}{\longrightarrow}								
\newcommand{\mapstoo}{\longmapsto}
\newtheorem{Lem}{Lemma}[section]
\newlength{\@thlabel@width}%
\newcommand{\thmenumhspace}{\settowidth{\@thlabel@width}{\itshape1.}\sbox{\@labels}{\unhbox\@labels\hspace{\dimexpr-\leftmargin+\labelsep+\@thlabel@width-\itemindent}}}
\newtheorem{Pro}[Lem]{Proposition}
\newtheorem{Thm}[Lem]{Theorem}
\newtheorem{Cor}[Lem]{Corollary}
\theoremstyle{definition}
\newtheorem{Def}[Lem]{Definition}
\newtheorem{Rem}[Lem]{Remark}
\newtheorem{Exa}[Lem]{Example}
\author[L.~Gehrmann]{Lennart Gehrmann}
\address{L.~Gehrmann \\ Fakult\"at f\"ur Mathematik \\ Universit\"at Duisburg-Essen \\ Thea-Leymann-Stra\ss e 9 \\ 45127 Essen \\ Germany}
\email{lennart.gehrmann@uni-due.de}
\title[Gelfand's trick for the spherical derived Hecke algebra]{Gelfand's trick for the spherical derived Hecke algebra}
\begin{document}

\begin{abstract}
Gelfand's trick shows that the spherical Hecke algebra of a $p$-adic split reductive group is commutative.
We adapt this strategy in order to show that the spherical derived Hecke algebra is graded-commutative under mild assumptions on the coefficient ring.
\end{abstract}

\maketitle


\subsection*{Introduction}
In the study of derived structures appearing in the Langlands correspondence (see \cite{Ve2} for an overview of some of the themes) the derived Hecke algebra plays a pivotal role.
Let $G$ be a connected, split reductive group over a $p$-adic field $F$ with residue field $\F_q$ and $l\neq p$ a prime.
In Section 3 of \cite{Ve} Venkatesh proves a derived version of the Satake isomorphism for the spherical derived Hecke algebra $\mathcal{H}(G)^{\sph}_{\Z/l^r\Z}$ of $G$ with coefficients in $\Z/l^r\Z$ under two assumptions:
\begin{itemize}
\item the prime $l$ does not divide the order of the Weyl group of $G$ and
\item $q\equiv 1 \bmod l^{r}$.
\end{itemize}
As a corollary, one deduces that the spherical derived Hecke algebra is graded-commutative under these assumptions.

In this note we adapt Gelfand's trick to the derived setting to prove that the derived Hecke algebra is graded-commutative under less restrictive assumptions:
let $R$ be a commutative ring in which $p$ and the number of $\F_q$-points of the full flag variety associated to $G$ are invertible. Then $\mathcal{H}(G)^{\sph}_{R}$ is graded-commutative.
In particular, we see that independent of the exponent $r$ the derived Hecke algebra $\mathcal{H}(G)^{\sph}_{\Z/l^{r}\Z}$ is graded-commutative for every prime $l$ such that $q\equiv1 \bmod l $ and $l$ does not divide the order of the Weyl group.

Let us remind ourselves of how Gelfand's trick works for the classical spherical Hecke algebra:
one first uses an involution on the group $G$ that sends the subgroup of integral points to itself and induces inversion on a maximal split torus to define an anti-automorphism of the Hecke algebra.
The Cartan decomposition implies that this anti-automorphism is in fact the identity, thus proving that the Hecke algebra is commutative.

The situation in the derived setting is different:
one needs both, the involution and the Cartan decomposition, just to define the (graded) anti-automorphism of the derived Hecke algebra.
The construction of said anti-automorphism works for arbitrary coefficient rings (see Proposition \ref{Gelfand}).
Contrary to what happens in degree $0$ the anti-automorphism might be non-trivial in higher degrees.
But under the above mentioned assumptions we are able to show that it is also an automorphism (see Theorem \ref{theorem}).

In addition, by explicitly computing the spherical derived Hecke algebra in the remaining cases we show that $\mathcal{H}(\GL_2(F))^{\sph}_{\Z/n\Z}$, $\mathcal{H}(\SL_2(F))^{\sph}_{\Z/n\Z}$ and $\mathcal{H}(\PGL_2(F))^{\sph}_{\Z/n\Z}$ are graded-commutative for all integers $n\in\Z$ with $(n,2p)=1$ (see Corollary \ref{cor}).

\subsection*{Acknowledgements}
I am grateful to Preston Wake for pointing out a mistake in an earlier draft of the article.
I thank Sebastian Bartling and Giovanni Rosso for helpful discussions.
I also thank the anonymous referees for their careful reading and useful comments that improved the exposition of this paper.
While working on this manuscript I was visiting McGill University, supported by Deutsche Forschungsgemeinschaft, and I would like to thank these institutions.

\subsection*{Notations}
Throughout this paper all rings are unital but not necessarily commutative.

Given a profinite group $H$ and a commutative ring $R$, we always endow $R$ with the trivial $H$-action and discrete topology and write $\HH^{i}(H,R)$ for the $i$-th continuous cohomology group of $H$ with values in $R$ (see for example \cite{NSW}, Section I.2).
Moreover, we put
$$\HH^{\ast}(H,R)=\bigoplus_{i\geq 0} \HH^{i}(H,R).$$
If $H^{\prime}\subseteq H$ is a closed subgroup, we denote the restriction map on cohomology by
$$\Res_{H^{\prime}}^{H}\colon\HH^{\ast}(H,R)\too\HH^{\ast}(H^{\prime}\hspace{-0.1em},R)$$
If $H^{\prime}\subseteq H$ is an open subgroup, we denote the corestriction map by
$$\Cores_{H^{\prime}}^{H}\colon\HH^{\ast}(H^{\prime}\hspace{-0.1em},R)\too\HH^{\ast}(H,R).$$

\section{Split reductive groups}
We fix a finite extension $F$ of $\Q_p$ with ring of integers $\mathcal{O}_F$, uniformizer $\varpi$ and residue field $\F_q$.
Further, we fix a connected, split reductive group $\underline{G}$ over $F$.
Then $\underline{G}$ is the generic fibre of a group scheme (that we also denote by $\underline{G}$) over $\mathcal{O}_F$, whose special fibre is reductive.
Let $\underline{T}\subseteq\underline{B}\subseteq \underline{G}$ be a maximal split torus and a Borel subgroup, both defined over $\mathcal{O}_F$.
Denote by $\underline{U}\subseteq\underline{B}$ the unipotent radical of $\underline{B}$.
We put $G=\underline{G}(F)$, $T=\underline{T}(F)$, $B=\underline{B}(F)$ and $K=\underline{G}(\mathcal{O}_F).$

We denote the set of characters respectively cocharacters of $\underline{T}$ by $X^\bullet$ respectively $X_\bullet$.
Let $\varphi^{+}\subseteq X^{\bullet}$ be the set of positive roots associated to $\underline{B}$ and set
$$P^+=\left\{ \lambda\in X_\bullet\mid \langle\lambda,\alpha\rangle\geq 0 \mbox{ for all } \alpha \in \varphi^{+} \right\}.$$

\begin{Pro}[Cartan decomposition]
We have:
$$G=KTK.$$
More precisely, the group $G$ is the disjoint union of the double cosets $K\lambda(\varpi)K$ where $\lambda$ runs through the cocharacters in $P^{+}$.

Furthermore, there exists an involution $\sigma$ of $G$ such that $\sigma(K)=K$ and $\sigma(t)=t^{-1}$ for all $t\in T.$
\end{Pro}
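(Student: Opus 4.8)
Both assertions are classical; in the order I would carry them out, one first establishes the Cartan decomposition and then builds the involution out of the structure theory. For $G=KTK$ I would either quote the statement from Bruhat--Tits theory or reprove it in the (enlarged) Bruhat--Tits building $\mathcal{X}$ of $\underline{G}$ over $F$. Since the $\mathcal{O}_{F}$-model $\underline{G}$ is reductive, $K=\underline{G}(\mathcal{O}_{F})$ is the stabilizer of a hyperspecial vertex $x_{0}$, and the apartment $\mathcal{A}$ attached to $\underline{T}$ is an affine space under $X_{\bullet}\otimes\mathbb{R}$ containing $x_{0}$, on which the normalizer $N_{\underline{G}}(\underline{T})(F)$ acts via the extended affine Weyl group $X_{\bullet}\rtimes W$; here $\lambda(\varpi)$ acts by the translation $\lambda$ and $W$ (lifted to $K$) fixes $x_{0}$. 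Given $g\in G$, the vertices $x_{0}$ and $gx_{0}$ lie in a common apartment and $K$ acts transitively on the apartments through $x_{0}$, so---replacing $g$ by $kg$ with $k\in K$, which does not change the double coset---we may assume $gx_{0}\in\mathcal{A}$; then $gx_{0}$ lies in the intersection of $G\cdot x_{0}$ with $\mathcal{A}$, hence $gx_{0}=\lambda(\varpi)x_{0}$ for some $\lambda\in X_{\bullet}$, so $\lambda(\varpi)^{-1}g$ fixes $x_{0}$ and lies in $K$, giving $g\in K\lambda(\varpi)K$. Conjugating $\lambda$ by a representative in $K$ of a suitable element of $W$ moves it into $P^{+}$, whence $G=\bigcup_{\lambda\in P^{+}}K\lambda(\varpi)K$.

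The part of the first assertion that requires real input is the disjointness of these cosets, and this I would reduce to the well-definedness of the ``Cartan projection''. Concretely, the vector-valued distance $d(x_{0},gx_{0})\in P^{+}$ in the building depends only on the double coset $KgK$ and recovers the dominant cocharacter $\lambda$ found above; since distinct elements of $P^{+}$ are realised as distinct such distances, the cosets $K\lambda(\varpi)K$, $\lambda\in P^{+}$, are pairwise disjoint. (For $\underline{G}=\GL_{n}$ this is exactly the uniqueness part of the elementary divisor theorem over the principal ideal domain $\mathcal{O}_{F}$.)

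For the involution I would use the Chevalley involution. Fix a pinning $(\underline{T},\underline{B},\{x_{\alpha}\}_{\alpha\in\Delta})$ of $\underline{G}$ defined over $\mathcal{O}_{F}$. The map $\chi\mapsto-w_{0}\chi$ on $X^{\bullet}$ preserves $\varphi^{+}$ and permutes $\Delta$, so it is an automorphism of the based root datum; by the isomorphism theorem it is induced by a unique automorphism $\theta$ of the $\mathcal{O}_{F}$-group scheme $\underline{G}$ preserving the pinning, and in particular $\theta(K)=K$. Let $\overline{w_{0}}\in K$ be the Tits representative of the longest Weyl element---the product of the elements $x_{\alpha}(1)x_{-\alpha}(-1)x_{\alpha}(1)$, $\alpha\in\Delta$, taken along a reduced decomposition of $w_{0}$---set $\sigma=\operatorname{Ad}(\overline{w_{0}})\circ\theta$, and pass to $F$-points. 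Then $\sigma$ is an automorphism of $G$ with $\sigma(K)=K$, and on $X^{\bullet}(\underline{T})$ it acts as $w_{0}\circ(-w_{0})=-1$, so $\sigma(t)=t^{-1}$ for $t\in T$. The remaining, and to my mind most delicate, point is that $\sigma$ is an involution: one computes $\sigma^{2}=\operatorname{Ad}\bigl(\overline{w_{0}}\,\theta(\overline{w_{0}})\bigr)$, and since $\theta$ merely permutes the pinning it carries $\overline{w_{0}}$ to the Tits representative of $w_{0}w_{0}w_{0}^{-1}=w_{0}$, which is $\overline{w_{0}}$ again because the Tits lift does not depend on the chosen reduced word; finally $\overline{w_{0}}^{\,2}=\bigl(\sum_{\alpha>0}\alpha^{\vee}\bigr)(-1)$ pairs to an even integer with every root and is therefore central in $G$, so $\operatorname{Ad}(\overline{w_{0}}^{\,2})=\mathrm{id}$ and $\sigma^{2}=\mathrm{id}$. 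In short, the real obstacles are the well-definedness of the Cartan projection and the verification that this natural candidate for $\sigma$ squares to the identity and not merely to a nontrivial inner automorphism.
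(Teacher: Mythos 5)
Your proposal is correct in substance, but it is worth noting that the paper does not argue this proposition at all: its ``proof'' consists of two citations, \S4 of Bruhat--Tits for the decomposition $G=\coprod_{\lambda\in P^+}K\lambda(\varpi)K$ and Bourbaki, Lie, Ch.~8, \S2 for the involution. What you have written is essentially a worked-out sketch of the standard arguments behind those citations: the building-theoretic proof of the Cartan decomposition (transitivity of $K=\mathrm{Stab}(x_0)$ on apartments through the hyperspecial point $x_0$, plus injectivity of the dominant ``Cartan projection'' $KgK\mapsto d(x_0,gx_0)$ for disjointness), and the explicit Chevalley-type involution $\sigma=\Ad(\overline{w_0})\circ\theta$ with $\theta$ the pinned automorphism realizing $-w_0$, which visibly preserves $K$ and inverts $T$, with $\sigma^2=\mathrm{id}$ reduced to $\theta(\overline{w_0})=\overline{w_0}$ and the centrality of $\overline{w_0}^{\,2}=(2\rho^\vee)(-1)$ (central because $\langle\alpha,2\rho^\vee\rangle\in 2\Z$ for every root $\alpha$). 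These verifications are correct, and they are exactly the delicate points; making them explicit is what your approach buys over the paper's citation-only treatment, and it also exhibits $\sigma$ concretely, generalizing the paper's $\GL_n$ example. Two steps you pass over quickly but which need (standard) justification: that a pinning of $\underline{G}$ exists over $\mathcal{O}_F$ (true because $\underline{G}$ is a split reductive, i.e.\ Chevalley, group scheme over $\mathcal{O}_F$, so the isomorphism theorem applies integrally and $\overline{w_0}\in K$), and that $G\cdot x_0\cap\mathcal{A}$ consists exactly of the translates $\lambda(\varpi)x_0$, $\lambda\in X_\bullet$ --- this is where hyperspeciality and a type-preservation (or Kottwitz homomorphism) argument enter, since not every vertex of $\mathcal{A}$ lies in the $G$-orbit of $x_0$ (already for $\SL_2$). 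With those two points acknowledged, your sketch is a faithful, more self-contained substitute for the references the paper quotes.
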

\begin{proof}
For the first claim see §4 of \cite{BruhatTits}.
For the second claim see Chapter 8, §2 of \cite{BourLie}.
\end{proof}

\begin{Exa}
In case $\underline{G}=\GL_n$ with $\underline{T}$ the torus of diagonal matrices we may take $\sigma(g)=(g^{t})^{-1}$.
\end{Exa}

The involution $\sigma\colon G\to G,\ g \mapsto g^\sigma$ descends to a map $\sigma\colon G/K \to G/K$.
We let $G$ act diagonally on $G/K \times G/K$, i.e.~$g.(x,y)=(gx,gy).$
The bijection
$$G/K \times G/K \too G/K \times G/K,\quad (x,y)\mapstoo (y^\sigma\hspace{-0.2em},x^\sigma)$$
sends $G$-orbits to $G$-orbits.
\begin{Cor}\label{corollary}
For all $(x,y)\in G/K \times G/K$ we have
$$G.(x,y)=G.(y^{\sigma}\hspace{-0.2em},x^{\sigma}).$$
\end{Cor}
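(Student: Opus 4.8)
The plan is to translate the statement into an identity of $K$-double cosets and then feed in the Cartan decomposition together with the two properties $\sigma(K)=K$ and $\sigma(t)=t^{-1}$ for $t\in T$. First I would recall the standard parametrization of the $G$-orbits on $G/K\times G/K$ by the double coset space $K\backslash G/K$: given $(x,y)$, write $x=aK$ and $y=bK$; then $a^{-1}.(x,y)=(K,a^{-1}b\,K)$, so every orbit meets the slice $\{K\}\times G/K$, and $(K,gK)$, $(K,g'K)$ are $G$-conjugate if and only if $g'\in KgK$. Hence the orbit of $(aK,bK)$ is recorded by the double coset $Ka^{-1}bK$.

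Since $\sigma$ is a group automorphism fixing $K$, it descends to $G/K$, and $(y^\sigma,x^\sigma)=(b^\sigma K,a^\sigma K)=b^\sigma.(K,(b^{-1}a)^\sigma K)$, whose orbit is recorded by $K(b^{-1}a)^\sigma K$. Writing $g=a^{-1}b$, Corollary \ref{corollary} therefore amounts to the assertion that $KgK=K(g^{-1})^\sigma K$ for every $g\in G$. To prove this I would use the Cartan decomposition to write $g=k_1\lambda(\varpi)k_2$ with $k_1,k_2\in K$ and $\lambda\in P^+$, so that $(g^{-1})^\sigma=(k_2^{-1})^\sigma\,\sigma(\lambda(\varpi)^{-1})\,(k_1^{-1})^\sigma$. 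Here $(k_i^{-1})^\sigma\in\sigma(K)=K$, while $\lambda(\varpi)^{-1}\in T$ forces $\sigma(\lambda(\varpi)^{-1})=\lambda(\varpi)$ because $\sigma$ inverts $T$. Thus $(g^{-1})^\sigma\in K\lambda(\varpi)K=KgK$, which is exactly the claimed identity.

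I do not expect a genuine obstacle here: once the orbit/double-coset dictionary is in place, the statement is essentially formal. The only thing requiring a little care is the bookkeeping of inverses, and the crux is the observation that the operation ``invert, then apply $\sigma$'' preserves each Cartan stratum $K\lambda(\varpi)K$; this holds because that operation restricts to the identity on $T$ (being the composition of two maps that each invert $T$) and fixes $K$ setwise, so it fixes the distinguished representative $\lambda(\varpi)$ with $\lambda\in P^+$ up to left and right translation by $K$.
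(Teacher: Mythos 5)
Your proof is correct and rests on exactly the same ingredients as the paper's: the Cartan decomposition together with $\sigma(K)=K$ and the fact that $\sigma$ inverts $T$. The paper phrases the argument more directly, writing $(x,y)=g.(t,1)$ and exhibiting $g^{\sigma}t^{-1}g^{-1}$ as an explicit element of $G$ carrying $(x,y)$ to $(y^{\sigma}\hspace{-0.2em},x^{\sigma})$, whereas you route the same computation through the dictionary between $G$-orbits on $G/K\times G/K$ and double cosets in $K\backslash G/K$; this is only a difference of packaging, not of substance.
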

\begin{proof}
By the Cartan decomposition there exist $t\in T$ and $g\in G$ such that $(x,y)=g.(t,1).$
Therefore, we have
\begin{align*}(y^\sigma\hspace{-0.2em},x^\sigma)&=g^{\sigma}.(1,t^{\sigma})\\
&=g^{\sigma}.(1,t^{-1})\\
&=(g^{\sigma}t^{-1}g^{-1}).(x,y).\end{align*}
\end{proof}

\begin{Pro}[Iwasawa decomposition]
We have $G=BK.$
\end{Pro}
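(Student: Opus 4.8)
Write $X=\underline{G}/\underline{B}$ for the flag variety attached to the pair $\underline{B}\subseteq\underline{G}$; since $\underline{B}$ is a Borel subgroup of the split reductive group scheme $\underline{G}$ over $\mathcal{O}_F$, the scheme $X$ is smooth and projective over $\mathcal{O}_F$. My plan is to prove that $G=KB$ and then to deduce the stated decomposition by passing to inverses: as $K$ and $B$ are subgroups, $(KB)^{-1}=B^{-1}K^{-1}=BK$, so $G=KB$ forces $G=BK$. Since $\underline{G}\to X$ is a $\underline{B}$-torsor, proving $G=KB$ amounts to showing that the $K$-orbit of the base point $x_{0}\in X(\mathcal{O}_F)$ (the point classifying $\underline{B}$ itself) is all of $X(F)$, once $X(F)$ has been identified $G$-equivariantly with $G/B$.

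First I would record the identification $X(F)=G/B$. Pulling the torsor $\underline{G}\to X$ back along a point of $X(F)$ produces a $\underline{B}$-torsor over $F$; such a torsor is trivial because $\underline{B}$ is connected, split and solvable, so $\mathrm{H}^{1}(F,\underline{B})=0$ (dévissage along $1\to\underline{U}\to\underline{B}\to\underline{T}\to 1$, using additive and multiplicative Hilbert~90). Hence $\underline{G}(F)\to X(F)$ is surjective with fibre over $x_{0}$ equal to $\underline{B}(F)=B$, giving $X(F)=G/B$ as $G$-sets. Second, the valuative criterion of properness, applied to the proper $\mathcal{O}_F$-scheme $X$ over the discrete valuation ring $\mathcal{O}_F$ with fraction field $F$, shows that $X(\mathcal{O}_F)\to X(F)$ is a bijection. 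So every point of $X(F)$ extends to a point $x\in X(\mathcal{O}_F)$.

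Third, I would lift $x$ to $K=\underline{G}(\mathcal{O}_F)$. Pulling $\underline{G}\to X$ back along $x$ gives a $\underline{B}$-torsor $P$ over $\mathcal{O}_F$, and a point of $P(\mathcal{O}_F)$ is exactly a lift of $x$ along $\underline{G}(\mathcal{O}_F)\to X(\mathcal{O}_F)$; thus it suffices that $P$ be trivial, i.e.\ that $\mathrm{H}^{1}(\mathcal{O}_F,\underline{B})$ vanish. Here $\operatorname{Pic}(\mathcal{O}_F)=0$ because $\mathcal{O}_F$ is local, whence $\mathrm{H}^{1}(\mathcal{O}_F,\underline{T})=0$ for the split torus $\underline{T}$; and $\mathrm{H}^{1}(\mathcal{O}_F,\mathbb{G}_{a})=0$ because $\mathcal{O}_F$ is affine, whence $\mathrm{H}^{1}(\mathcal{O}_F,\underline{U})$ is trivial by dévissage along the central series of the split unipotent group $\underline{U}$; combining these along $1\to\underline{U}\to\underline{B}\to\underline{T}\to 1$ gives the vanishing for $\underline{B}$. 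Putting the three steps together, $K\to X(F)=G/B$ is onto, so every coset $gB$ meets $K$, i.e.\ $G=KB$, and therefore $G=BK$.

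The main obstacle is the torsor-theoretic input in the third step: the relevant $\mathrm{H}^{1}$ is non-abelian, so the dévissage must be carried out with twisted forms (a form of a split unipotent group over a local ring is again an iterated extension of additive groups) rather than with a naive long exact sequence. An alternative that sidesteps scheme-theoretic cohomology altogether is to quote Bruhat--Tits theory: $B$ is the stabiliser in $G$ of a chamber of the spherical building at infinity, and $K$ already acts transitively on the set of such chambers, which immediately yields $G=KB$. I would, however, favour the flag-variety argument above, since it uses only properness of $X$ and the vanishing of $\operatorname{Pic}(\mathcal{O}_F)$.
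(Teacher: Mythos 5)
Your argument is correct, but it proceeds quite differently from the paper, which offers no proof at all and simply refers to \S 4 of Bruhat--Tits (where the Iwasawa decomposition is obtained from the theory of valued root data and the building). Your route instead exploits exactly the integral structure the paper has fixed, namely a reductive $\mathcal{O}_F$-model $\underline{G}$ with $\underline{T}\subseteq\underline{B}$ defined over $\mathcal{O}_F$: the flag scheme $X=\underline{G}/\underline{B}$ is smooth projective over $\mathcal{O}_F$ and $\underline{G}\to X$ is a $\underline{B}$-torsor, so the three inputs you use --- $\mathrm{H}^1(F,\underline{B})=0$ (giving $X(F)=G/B$), the valuative criterion (giving $X(\mathcal{O}_F)=X(F)$), and $\mathrm{H}^1(\mathcal{O}_F,\underline{B})=0$ (lifting integral points of $X$ to $K$) --- indeed yield $G=KB$ and hence $G=BK$. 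Two small remarks. First, the non-abelian d\'evissage is even less delicate than you suggest: since every class of $\mathrm{H}^1(\cdot,\underline{B})$ you consider dies in $\mathrm{H}^1(\cdot,\underline{T})$, you only need exactness of the pointed-set sequence at the middle term (the fibre over the distinguished point is the image of $\mathrm{H}^1(\cdot,\underline{U})$), so no twisting of $\underline{U}$ is actually required; the same applies at each step of the filtration of $\underline{U}$ by root groups. Second, note that your proof is genuinely tied to the hyperspecial situation (a reductive model over $\mathcal{O}_F$ with a Borel over $\mathcal{O}_F$), whereas the Bruhat--Tits argument cited in the paper works for arbitrary special maximal compact subgroups; in the split setting of this paper that extra generality is not needed, and what your approach buys is a self-contained proof using only properness of $X$ and the vanishing of $\operatorname{Pic}(\mathcal{O}_F)$ and of coherent $\mathrm{H}^1$ on affines.
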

\begin{proof}
See §4 of \cite{BruhatTits}.
\end{proof}

\section{Graded-commutative rings and cup products}
A graded ring $A=\bigoplus_{n\geq 0} A_n$ is called graded-commutative if
$$a\ast b= (-1)^{ij} \cdot b\ast a \quad \forall a\in A_i, b\in A_j.$$
A basic example of a graded-commutative ring is the cohomology ring of a group:
let $H$ be a profinite group and $R$ a commutative ring.
We consider the cup product pairing
$$\HH^i(H,R) \times \HH^j(H,R)\too \HH^{i+j}(H,R),\quad (a,b)\mapstoo a\cup b$$
By \cite{NSW}, Proposition 1.4.4, this induces a graded ring structure on the cohomology $\HH^{\ast}(H,R)$. 
Moreover, from the same proposition one immediately deduces the following:
\begin{Pro}\label{group}
The graded ring $\HH^{\ast}(H,R)$ is graded-commutative. 
\end{Pro}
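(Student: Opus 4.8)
The plan is to deduce graded-commutativity from the anticommutativity of the cup product, namely
$$a\cup b = (-1)^{ij}\cdot b\cup a\qquad\text{for all }a\in\HH^i(H,R),\ b\in\HH^j(H,R),$$
which is precisely the content of \cite{NSW}, Proposition 1.4.4; granting this relation, graded-commutativity of the ring $\HH^{\ast}(H,R)=\bigoplus_{i\geq 0}\HH^i(H,R)$ with multiplication $\ast=\cup$ is immediate from the definition recalled above.

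To justify the anticommutativity relation itself I would argue homologically. Let $P_\bullet\too R$ be the standard (bar) resolution computing the continuous cohomology of $H$; it is a complex of relatively projective, $R$-split $R[[H]]$-modules, so $P_\bullet\otimes_R P_\bullet$ is again a resolution of $R$ of the same type. The cup product is induced, after applying $\Hom_{R[[H]]}(-,R)$, by a diagonal approximation, i.e.\ an $H$-equivariant chain map $\Delta\colon P_\bullet\too P_\bullet\otimes_R P_\bullet$ lifting $\mathrm{id}_R$. The sign-twisted transposition $\tau(x\otimes y)=(-1)^{\lvert x\rvert\lvert y\rvert}\cdot y\otimes x$ is a chain map, so $\tau\circ\Delta$ is a second diagonal approximation; by the comparison theorem for such resolutions, $\Delta$ and $\tau\circ\Delta$ are $H$-equivariantly chain homotopic. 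Applying $\Hom_{R[[H]]}(-,R)$ and passing to cohomology, the two induced products on $\HH^{\ast}(H,R)$ coincide; unwinding the definitions, the product coming from $\Delta$ sends $(a,b)\mapsto a\cup b$ while the one coming from $\tau\circ\Delta$ sends $(a,b)\mapsto (-1)^{ij}\cdot b\cup a$, which yields the formula.

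The main obstacle is purely one of bookkeeping: one must check that the transposition is a chain map only after inserting the Koszul sign, and that the comparison theorem applies in the setting of continuous cochains of a profinite group with the discrete, trivially acted upon coefficient module $R$ — both of which are carried out in \cite{NSW}, Chapter I. Once these points are in place the argument is entirely formal, and since \cite{NSW}, Proposition 1.4.4, already records the conclusion, in the write-up I would simply cite it.
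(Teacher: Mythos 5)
Your proposal is correct and matches the paper, which likewise disposes of the statement by citing \cite{NSW}, Proposition 1.4.4, for the graded-commutativity of the cup product; your additional sketch via diagonal approximations and the comparison theorem is a sound justification of that cited fact but is not needed in the write-up.
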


Let us recall a few standard properties of cup products respectively group cohomology in general that we are going to use throughout this article, most of which can be found in Section I.5 of \cite{NSW}.
Suppose $H^{\prime}\subseteq H$ is a closed subgroup.
A straightforward calculation with continuous cochains shows that
\begin{align}\label{rescup}
\Res_{H'}^{H}(a\cup b)= \Res_{H'}^{H}(a) \cup  \Res_{H'}^{H}(b)
\end{align}
for all $a, b \in \HH^\ast(H,R).$
In other words, the restriction map
$$\Res_{H'}^{H}\colon \HH^\ast(H,R)\too \HH^\ast(H',R)$$
is a homomorphism of graded rings.

For a continuous homomorphism $\varphi\colon H_1\to H_2$ between profinite groups we denote by 
$$\varphi^\ast\colon \HH^\ast(H_2,R)\too \HH^\ast(H_1,R)$$ 
its pullback on cohomology groups (see the beginning of \cite{NSW}, Section I.5).
This makes taking continuous cohomology with values in $R$ a contravariant functor from the category of profinite groups to the category of $R$-modules.

If $\iota\colon H'\into H$ is the inclusion of a closed subgroup, taking pullback along $\iota$ is simply the restriction map, i.e.~
$$\iota^\ast=\Res_{H'}^{H}.$$
Identity \eqref{rescup} has the following generalization, which again follows by a straightforward calculation with continuous cochains:
if $\varphi\colon H_1\to H_2$ is a continuous homomorphism between profinite groups, then
\begin{align*}
\varphi^\ast(a\cup b)= \varphi^\ast(a) \cup  \varphi^\ast(b)
\end{align*}
holds for all $a, b \in \HH^\ast(H_2,R).$

An immediate consequence of functoriality for pullbacks is the following:
let $\varphi\colon H_1\to H_2$ be a continuous homomorphism between profinite groups and $H_1'\subseteq H_1$ a closed subgroup.
We put  $H_2'=\varphi(H_1')$.
Then the diagram
\begin{center}
\begin{tikzpicture}
    \path 	(0,0) 	node[name=A]{$\HH^{\ast}(H_2,R)$}
		(4,0) 	node[name=B]{$\HH^{\ast}(H_1,R)$}
		(0,-1.5) 	node[name=C]{$\HH^{\ast}(H_2',R)$}
		(4,-1.5) 	node[name=D]{$\HH^{\ast}(H_1',R)$};
    \draw[->] (A) -- (C) node[midway, left]{$\Res^{H_2}_{H_2'}$};
    \draw[->] (A) -- (B) node[midway, above]{$\varphi^\ast$};
    \draw[->] (C) -- (D) node[midway, above]{$\varphi^\ast$};
    \draw[->] (B) -- (D) node[midway, right]{$\Res^{H_1}_{H_1'}$};
\end{tikzpicture} 
\end{center}
is commutative.

If moreover $\varphi$ is an isomorphism and $H_1'\subseteq H_1$ is an open subgroup, the following diagram is commutative:
\begin{center}
\begin{tikzpicture}
    \path 	(0,0) 	node[name=A]{$\HH^{\ast}(H_2',R)$}
		(4,0) 	node[name=B]{$\HH^{\ast}(H_1',R)$}
		(0,-1.5) 	node[name=C]{$\HH^{\ast}(H_2,R)$}
		(4,-1.5) 	node[name=D]{$\HH^{\ast}(H_1,R)$};
    \draw[->] (A) -- (C) node[midway, left]{$\Cores^{H_2}_{H_2'}$};
    \draw[->] (A) -- (B) node[midway, above]{$\varphi^\ast$};
    \draw[->] (C) -- (D) node[midway, above]{$\varphi^\ast$};
    \draw[->] (B) -- (D) node[midway, right]{$\Cores^{H_1}_{H_1'}$};
\end{tikzpicture} 
\end{center}

Besides restrictions we will frequently use the following special case of pullbacks:
for a profinite group $H$, a closed subgroup $H'\subseteq H$ and an element $g\in H$ we denote by
$$\Ad(g)\colon H'\to g H'\hspace{-0.1em}g^{-1},\quad h\mapstoo ghg^{-1}$$
the conjugation isomorphism.
In accordance with the notation used in \cite{Ve} we put
$$[g]^\ast=\Ad(g)^\ast\colon \HH^\ast(g H'\hspace{-0.1em}g^{-1}, R) \too \HH^\ast(H', R).$$
To avoid confusion let us remark that the maps $[g]^\ast$ are called conjugation in \cite{NSW} and denoted by $(g^{-1})_\ast$.
An explicit calculation with continuous cochains (see \cite{KB}, Section III.8 for the non-continuous case) shows that, if $g$ is an element of $H'$, the map
$$[g]^\ast\colon \HH^\ast(H', R) \too \HH^\ast(H', R)$$
is the identity.

\section{Derived Hecke algebra}
We fix a commutative ring $R$ in which $p$ is invertible.
An $R[G]$-module $M$ is called smooth if the stabilizer of each element $m\in M$ is open in $G$.
The category $\mathcal{C}^{\sm}_R(G)$ of smooth $R[G]$-modules is an abelian category and has enough projectives (see Appendix A.2 of \cite{Ve}).

\begin{Def}
The \emph{spherical derived Hecke algebra} of $G$ with coefficients in $R$ is the graded algebra
$$\mathcal{H}(G)^{\sph}_R=\mathcal{H}(G,K)_R=\Ext^{\ast}_{\mathcal{C}^{\sm}_R(G)}(R[G/K],R[G/K]).$$
\end{Def}

We will use a more explicit description of $\mathcal{H}(G)^{\sph}_R$ (cf.~\cite{Ve}, Section 2.3).
An isomorphism between the spherical derived Hecke algebra and the algebra described below is constructed in Appendix A of \cite{Ve}.
The following notations will come in handy:
given a tuple $(x_1\ldots,x_n)\in (G/K)^n$ we write $G_{x_1,\ldots,x_n}$ for its stabilizer in $G$.
By definition we have $G_{x_1,\ldots,x_n}=\bigcap_{i=1}^{n}G_{x_i}.$

An element $h\in \mathcal{H}(G)^{\sph}_R$ is a collection of elements $h(x,y)\in \HH^{\ast}(G_{x,y},R)$ for all $(x,y) \in G/K \times G/K$ such that
\begin{itemize}
\item $h$ is $G$-invariant, i.e.~$[g]^{\ast}h(gx,gy)=h(x,y)$ for all $g\in G$ and
\item $h$ has finite support modulo $G$, i.e.~$h(x,y)=0$ outside a finite set of $G$-orbits in $G/K\times G/K.$
\end{itemize}
The addition of two elements $h_1$ and $h_2$ is pointwise.
The degree $n$-part of $\mathcal{H}(G)^{\sph}_R$ consists of all elements which take values in $\HH^{n}(\cdot,R).$
Let $x$ and $z$ be elements of $G/K$.
The quantity
$$\Cores^{G_{x,z}}_{G_{x,y,z}}(\Res^{G_{x,y}}_{G_{x,y,z}} (h_1(x,y))\cup \Res^{G_{y,z}}_{G_{x,y,z}} (h_2(y,z)))$$
only depends on the $G_{x,z}$-orbit of $y\in G/K$.
We have the following formula for the product in $\mathcal{H}(G)^{\sph}_R$:
\[\label{product}\tag{\mbox{$\dagger$}}
(h_1 \ast h_2)(x,z)=\hspace{-0.3em}\sum_{y\in G_{x,z}\backslash G/K}\hspace{-0.6em} \Cores^{G_{x,z}}_{G_{x,y,z}}(\Res^{G_{x,y}}_{G_{x,y,z}} (h_1(x,y))\cup \Res^{G_{y,z}}_{G_{x,y,z}} (h_2(y,z))).
\]

\section{Gelfand's trick}
As before, $R$ denotes a commutative ring in which $p$ is invertible.
For any two elements $x$ and $y$ of $G/K$ there exists an element $g_{x,y}\in G$ such that $g_{x,y}(x,y)=(y^{\sigma}\hspace{-0.2em},x^{\sigma})$ by Corollary \ref{corollary}.
Clearly, $g_{x,y}$ is not unique, but its image in $G/G_{x,y}$ is.

It is easy to see that $g_{x,y}G_{x,y}g_{x,y}^{-1}=G_{x^{\sigma}\hspace{-0.2em},y^{\sigma}}$ and $\sigma(G_{x,y})=G_{x^{\sigma}\hspace{-0.2em},y^{\sigma}}$.
Thus we get a homomorphism
$$[g_{x,y}]^{\ast}\sigma^{\ast}\colon \HH^{\ast}(G_{x,y},R)\too\HH^{\ast}(G_{x,y},R).$$
It is independent of the choice of $g_{x,y}$.
Indeed, any other choice is of the form $g_{x,y}'=g_{x,y}g$ with $g\in G_{x,y}.$
We have $[g_{x,y}']^{\ast}\sigma^{\ast}=[g]^{\ast}[g_{x,y}]^{\ast}\sigma^{\ast}$ and, thus, independence follows from the fact that $[g]^\ast$ acts as the identity on $\HH^{\ast}(G_{x,y},R)$.

Given an element $h\in\mathcal{H}(G)^{\sph}_R$ of the spherical derived Hecke algebra we define the collection of elements $h^{\sigma}$ by
$$h^{\sigma}(x,y)=[g_{x,y}]^{\ast}\sigma^{\ast} h(x,y).$$
This collection defines an element of the derived Hecke algebra:
let $g$ be an element of $G$.
From the equality $\Ad(g)\circ \sigma = \sigma \circ \Ad(g^\sigma)$ we deduce that
\begin{align}\label{adjoint}
\sigma^{\ast}[g]^\ast=[g^\sigma]^{\ast}\sigma^{\ast}.
\end{align}
Therefore, we get that
\begin{align*}
h^{\sigma}(x,y)
&= [g_{x,y}]^{\ast}\sigma^{\ast} h(x,y)\\
&= [g_{x,y}]^{\ast}\sigma^{\ast} [g^\ast]h(gx,gy)\\
&= [g_{x,y}]^{\ast}[g^\sigma]^\ast \sigma^{\ast} h(gx,gy).
\end{align*}
By definition we have $g^\sigma g_{x,y}g^{-1} .(gx,gy)= ((gy)^\sigma\hspace{-0.2em}, (gx)^{\sigma}).$
Thus, we may choose $g_{gx,gy}=g^\sigma g_{x,y}g^{-1}$ and see that
\begin{align*}
[g_{x,y}]^{\ast}[g^\sigma]^\ast \sigma^{\ast} h(gx,gy)
&= [g]^\ast[g_{gx,gy}]^{\ast} \sigma^{\ast} h(gx,gy)\\
&= [g]^\ast h^{\sigma}(gx,gy),
\end{align*}
that is, the collection $h^\sigma$ is $G$-invariant.
That it has finite support modulo $G$ follows directly from its construction.

\begin{Lem}
The $R$-linear map
$$\mathcal{H}(G)^{\sph}_R\too \mathcal{H}(G)^{\sph}_R,\quad h\mapstoo h^\sigma$$
is an involution, i.e.~the equality $(h^\sigma)^\sigma=h$ holds for all $h\in \mathcal{H}(G)^{\sph}_R.$
\end{Lem}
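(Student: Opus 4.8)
The plan is to verify the equality $(h^\sigma)^\sigma(x,y)=h(x,y)$ pointwise, for every pair $(x,y)\in G/K\times G/K$, by unwinding the definition of $(-)^\sigma$ twice and then invoking only formal identities already at our disposal: contravariant functoriality of $\HH^\ast(-,R)$, the commutation relation \eqref{adjoint}, namely $\sigma^\ast[g]^\ast=[g^\sigma]^\ast\sigma^\ast$, and the fact recalled above that $[g]^\ast$ acts as the identity on $\HH^\ast(H',R)$ whenever $g\in H'$.

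First I would fix, for the given pair, an element $g_{x,y}\in G$ with $g_{x,y}.(x,y)=(y^\sigma,x^\sigma)$; since the operator $[g_{x,y}]^\ast\sigma^\ast$ is independent of this choice, the same $g_{x,y}$ may be used for both applications of $\sigma$. Unwinding the definition then gives
\[
(h^\sigma)^\sigma(x,y)=[g_{x,y}]^\ast\sigma^\ast[g_{x,y}]^\ast\sigma^\ast h(x,y).
\]
Applying \eqref{adjoint} to the inner composite $\sigma^\ast[g_{x,y}]^\ast$ rewrites the right-hand side as $[g_{x,y}]^\ast[g_{x,y}^\sigma]^\ast\sigma^\ast\sigma^\ast h(x,y)$. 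Because $\sigma$ is an involution of $G$ with $\sigma(G_{x,y})=G_{x^\sigma,y^\sigma}$, functoriality shows $\sigma^\ast\sigma^\ast$ is the identity of $\HH^\ast(G_{x,y},R)$, and similarly $[g_{x,y}]^\ast[g_{x,y}^\sigma]^\ast=[g_{x,y}^\sigma g_{x,y}]^\ast$. Hence
\[
(h^\sigma)^\sigma(x,y)=[g_{x,y}^\sigma g_{x,y}]^\ast h(x,y),
\]
and everything reduces to proving $g_{x,y}^\sigma g_{x,y}\in G_{x,y}$, for then the remaining operator is the identity.

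To establish that, I would use that $\sigma$ is compatible with the $G$-action on $G/K$, i.e.\ $(gz)^\sigma=g^\sigma z^\sigma$ for $g\in G$ and $z\in G/K$. Applying $\sigma$ to the defining relations $g_{x,y}x=y^\sigma$ and $g_{x,y}y=x^\sigma$ yields $g_{x,y}^\sigma x^\sigma=y$ and $g_{x,y}^\sigma y^\sigma=x$; composing these gives $g_{x,y}^\sigma g_{x,y}x=x$ and $g_{x,y}^\sigma g_{x,y}y=y$, so that $g_{x,y}^\sigma g_{x,y}\in G_x\cap G_y=G_{x,y}$, completing the argument. The computation is otherwise routine; the only thing that demands care is the bookkeeping of sources and targets of the maps involved — in particular that the relevant instance of \eqref{adjoint} is the one relating $\HH^\ast(G_{x^\sigma,y^\sigma},R)$ to itself, and that $[g_{x,y}^\sigma]^\ast$ has source $\HH^\ast(G_{x,y},R)$, which is precisely guaranteed by $g_{x,y}^\sigma g_{x,y}\in G_{x,y}$. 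For that reason I would prove the stabilizer claim first and only then run the formal manipulation.
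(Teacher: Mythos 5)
Your proposal is correct and follows essentially the same route as the paper: use the same $g_{x,y}$ twice, commute $\sigma^\ast$ past $[g_{x,y}]^\ast$ via \eqref{adjoint}, use $\sigma^\ast\sigma^\ast=\mathrm{id}$, and reduce to the fact that $g_{x,y}^\sigma g_{x,y}$ stabilizes $(x,y)$, so $[g_{x,y}^\sigma g_{x,y}]^\ast$ is the identity on $\HH^\ast(G_{x,y},R)$. The only cosmetic difference is that the paper phrases the stabilizer step by observing that $g_{x,y}^\sigma$ is an admissible choice of $g_{x^\sigma\hspace{-0.2em},y^\sigma}$, while you verify the same fact by a direct computation.
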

\begin{proof}
Since $g_{x,y}^\sigma .(x^\sigma\hspace{-0.2em},y^\sigma)=((g_{x,y}x)^\sigma\hspace{-0.2em},(g_{x,y}y)^\sigma)=(y,x)$ we see that we may take $g_{x,y}^\sigma$ as a choice for $g_{x^{\sigma}\hspace{-0.2em},y^{\sigma}}.$
Thus, by \eqref{adjoint} we deduce the equality
\begin{align*}
[g_{x,y}]^\ast\sigma^\ast[g_{x,y}]^\ast\sigma^\ast
=[g_{x,y}]^\ast[g_{x,y}^{\sigma}]^\ast\sigma^\ast\sigma^\ast
=[g_{x^{\sigma}\hspace{-0.2em},y^{\sigma}}g_{x,y}]^\ast
\end{align*}
of maps from $H^\ast(G_{x,y},R)$ to itself.
By definition the product $g_{x^{\sigma}\hspace{-0.2em},y^{\sigma}}g_{x,y}$ stabilizes the element $(x,y)$, i.e.~$g_{x^{\sigma}\hspace{-0.2em},y^{\sigma}}g_{x,y}\in G_{x,y}.$
Hence, $[g_{x^{\sigma}\hspace{-0.2em},y^{\sigma}}g_{x,y}]^\ast$ acts as the identity on $H^\ast(G_{x,y},R)$ and the claim follows.
\end{proof}
 
\begin{Pro}\label{Gelfand}
Let $h_1$ and $h_2$ elements of $\mathcal{H}(G)^{\sph}_R$ of degree $i$ respectively $j$.
The following equality holds:
$$(h_1\ast h_2)^{\sigma}= (-1)^{ij} \cdot h_2^{\sigma}\ast  h_1^{\sigma}$$
\end{Pro}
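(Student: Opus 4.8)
The plan is to evaluate both sides on an arbitrary pair $(x,z)\in G/K\times G/K$ and to match, term by term, the two expansions coming from the product formula \eqref{product}. Write $g=g_{x,z}$, so that $gx=z^{\sigma}$ and $gz=x^{\sigma}$; applying $\sigma$ these read $g^{\sigma}x^{\sigma}=z$ and $g^{\sigma}z^{\sigma}=x$, and moreover $g^{\sigma}y^{\sigma}=(gy)^{\sigma}$ for every $y$. I would first unfold
\[
(h_1\ast h_2)^{\sigma}(x,z)=[g]^{\ast}\sigma^{\ast}\sum_{w\in G_{x,z}\backslash G/K}\Cores^{G_{x,z}}_{G_{x,w,z}}\bigl(\Res^{G_{x,w}}_{G_{x,w,z}}(h_1(x,w))\cup\Res^{G_{w,z}}_{G_{x,w,z}}(h_2(w,z))\bigr).
\]
Both $\sigma^{\ast}$ and $[g]^{\ast}$ are pullbacks along continuous group homomorphisms, hence preserve cup products, and they are compatible with $\Res$ and — being induced by isomorphisms — with $\Cores$, via the commutative squares of Section 2; so they can be pushed inside the sum. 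Using that $\sigma$ and conjugation by $g$ permute the stabilizer subgroups appropriately (e.g.\ $\sigma(G_{x,w,z})=G_{x^{\sigma},w^{\sigma},z^{\sigma}}$ and $gG_{x,w,z}g^{-1}=G_{gx,gw,gz}$), I would then reindex via the substitution $w=(gy)^{\sigma}$: since $w\mapsto w^{\sigma}$ carries $G_{x,z}$-orbits to $G_{x^{\sigma},z^{\sigma}}$-orbits and $v\mapsto g^{-1}v$ carries $G_{x^{\sigma},z^{\sigma}}$-orbits back to $G_{x,z}$-orbits, the map $y\mapsto(gy)^{\sigma}$ is a bijection of $G_{x,z}\backslash G/K$ with itself. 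One ends up with
\[
(h_1\ast h_2)^{\sigma}(x,z)=\sum_{y\in G_{x,z}\backslash G/K}\Cores^{G_{x,z}}_{G_{x,y,z}}\bigl(\Res^{G_{y,z}}_{G_{x,y,z}}(A_y)\cup\Res^{G_{x,y}}_{G_{x,y,z}}(B_y)\bigr),
\]
where $A_y=[g]^{\ast}\sigma^{\ast}h_1\bigl(x,(gy)^{\sigma}\bigr)\in\HH^{i}(G_{y,z},R)$ and $B_y=[g]^{\ast}\sigma^{\ast}h_2\bigl((gy)^{\sigma},z\bigr)\in\HH^{j}(G_{x,y},R)$.

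The two cup-product factors now sit on $G_{y,z}$ and $G_{x,y}$ — in the opposite order to the one appearing in the product formula for $h_2^{\sigma}\ast h_1^{\sigma}$. Applying graded-commutativity of the cup product on $\HH^{\ast}(G_{x,y,z},R)$ (Proposition \ref{group}) interchanges them at the cost of a sign $(-1)^{ij}$. Hence the proposition reduces to the two identities $B_y=h_2^{\sigma}(x,y)$ and $A_y=h_1^{\sigma}(y,z)$, for then the displayed sum equals $(-1)^{ij}(h_2^{\sigma}\ast h_1^{\sigma})(x,z)$ by \eqref{product}.

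To establish $B_y=h_2^{\sigma}(x,y)$ I would argue as in the proof of Corollary \ref{corollary}: from $(gy)^{\sigma}=g^{\sigma}y^{\sigma}$ and $z=g^{\sigma}x^{\sigma}$ one reads off $\bigl((gy)^{\sigma},z\bigr)=g^{\sigma}.(y^{\sigma},x^{\sigma})$, which lies in $G.(y^{\sigma},x^{\sigma})=G.(x,y)$ by Corollary \ref{corollary}. Choosing $g_2\in G$ with $g_2.\bigl((gy)^{\sigma},z\bigr)=(x,y)$, the $G$-invariance of $h_2$ gives $h_2\bigl((gy)^{\sigma},z\bigr)=[g_2]^{\ast}h_2(x,y)$; applying $\sigma^{\ast}$ (and invoking \eqref{adjoint}) and then $[g]^{\ast}$ (and invoking functoriality of pullbacks) transforms $B_y$ into $[g_2^{\sigma}g]^{\ast}\sigma^{\ast}h_2(x,y)$. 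A direct computation from $g_2(gy)^{\sigma}=x$ and $g_2z=y$ shows $g_2^{\sigma}g.(x,y)=(y^{\sigma},x^{\sigma})$, so $g_2^{\sigma}g$ is an admissible choice for $g_{x,y}$; since $[g_{x,y}]^{\ast}\sigma^{\ast}$ does not depend on this choice, we conclude $B_y=h_2^{\sigma}(x,y)$. The identity $A_y=h_1^{\sigma}(y,z)$ is proved in exactly the same manner, starting from $\bigl(x,(gy)^{\sigma}\bigr)=g^{\sigma}.(z^{\sigma},y^{\sigma})\in G.(y,z)$.

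The compatibilities of $\sigma^{\ast}$ and $[g]^{\ast}$ with $\Res$, $\Cores$ and $\cup$, and the reindexing of the double-coset sum, are routine. The step I expect to be the real obstacle is the identification of $A_y$ and $B_y$: here one must track with care which stabilizer subgroup each cohomology class lives on at every stage, and the direction of each conjugation map, in order to see that the composites $g_2^{\sigma}g$ and $g_1^{\sigma}g$ (with $g_1.\bigl(x,(gy)^{\sigma}\bigr)=(y,z)$) are genuinely admissible choices for $g_{x,y}$ and $g_{y,z}$. Once the substitution $w=(gy)^{\sigma}$ is fixed, every remaining choice in the argument is either forced or immaterial to the final value.
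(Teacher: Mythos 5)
Your proof is correct and follows essentially the same route as the paper's: expand via the product formula \eqref{product}, push $\sigma^{\ast}$ and $[g_{x,z}]^{\ast}$ through $\Cores$, $\Res$ and $\cup$, reindex the double-coset sum by $\sigma$ (twisted by $g_{x,z}$), and identify the local factors with $h_1^{\sigma}$ and $h_2^{\sigma}$ using $G$-invariance, the identity \eqref{adjoint} and the independence of the choice of $g_{x,y}$, with the sign coming from graded-commutativity of the cup product. The only difference is organizational — you apply the twist $[g_{x,z}]^{\ast}\sigma^{\ast}$ to the expansion at $(x,z)$, whereas the paper first moves to $(z^{\sigma},x^{\sigma})$ by $G$-invariance — and your explicit verification that $g_2^{\sigma}g_{x,z}$ and $g_1^{\sigma}g_{x,z}$ are admissible choices for $g_{x,y}$ and $g_{y,z}$ is sound.
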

\begin{proof}
Let $x,z \in G/K$ be arbitrary elements.
We have:
\begin{align*}
&(h_1\ast h_2)(x,z)\\=&[g_{x,z}]^{\ast}(h_1\ast h_2)(z^{\sigma}\hspace{-0.2em},x^{\sigma})\\
=&[g_{x,z}]^{\ast}\hspace{-0.6em}\sum_{y\in G_{z^{\sigma}\hspace{-0.2em},x^{\sigma}}\backslash G/K}\hspace{-1.2em}
\Cores^{G_{z^{\sigma}\hspace{-0.2em},x^{\sigma}}}_{G_{z^{\sigma}\hspace{-0.2em},y,x^{\sigma}}}(\Res^{G_{z^{\sigma}\hspace{-0.2em},y}}_{G_{z^{\sigma}\hspace{-0.2em},y,x^{\sigma}}} (h_1(z^{\sigma}\hspace{-0.2em},y))\cup \Res^{G_{y,x^{\sigma}}}_{G_{z^{\sigma}\hspace{-0.2em},y,x^{\sigma}}} (h_2(y,x^{\sigma})))\\
=&(-1)^{ij}[g_{x,z}]^{\ast}\hspace{-0.6em}\sum_{y\in G_{z^{\sigma}\hspace{-0.2em},x^{\sigma}}\backslash G/K}\hspace{-1.2em}\Cores^{G_{z^{\sigma}\hspace{-0.2em},x^{\sigma}}}_{G_{z^{\sigma}\hspace{-0.2em},y,x^{\sigma}}}(\Res^{G_{y,x^{\sigma}}}_{G_{z^{\sigma}\hspace{-0.2em},y,x^{\sigma}}} (h_2(y,x^{\sigma}))\cup \Res^{G_{z^{\sigma}\hspace{-0.2em},y}}_{G_{z^{\sigma}\hspace{-0.2em},y,x^{\sigma}}} (h_1(z^{\sigma}\hspace{-0.2em},y))).
\end{align*}
The first equality follows from the definition of the derived Hecke algebra and the last equality follows from Proposition \ref{group}.
Since $\sigma(G_{x,z})=G_{x^{\sigma}\hspace{-0.2em},z^{\sigma}}$ we deduce that $\sigma$ sends $G_{x,z}$-orbits to $G_{x^{\sigma}\hspace{-0.2em},z^{\sigma}}$-orbits in $G/K$.
Therefore the following equality holds:
\begin{align*}
&\sum_{y\in G_{z^{\sigma}\hspace{-0.2em},x^{\sigma}}\backslash G/K}\hspace{-1.2em}\Cores^{G_{z^{\sigma}\hspace{-0.2em},x^{\sigma}}}_{G_{z^{\sigma}\hspace{-0.2em},y,x^{\sigma}}}(\Res^{G_{y,x^{\sigma}}}_{G_{z^{\sigma}\hspace{-0.2em},y,x^{\sigma}}} (h_2(y,x^{\sigma}))\cup \Res^{G_{z^{\sigma}\hspace{-0.2em},y}}_{G_{z^{\sigma}\hspace{-0.2em},y,x^{\sigma}}} (h_1(z^{\sigma}\hspace{-0.2em},y)))\\
=&\sum_{y\in G_{x,z}\backslash G/K}\hspace{-1.2em}\Cores^{G_{z^{\sigma}\hspace{-0.2em},x,^{\sigma}}}_{G_{z^{\sigma}\hspace{-0.2em},y^{\sigma}\hspace{-0.2em},x^{\sigma}}}(\Res^{G_{y^{\sigma}\hspace{-0.2em},x^{\sigma}}}_{G_{z^{\sigma}\hspace{-0.2em},y^{\sigma}\hspace{-0.2em},x^{\sigma}}} (h_2(y^{\sigma}\hspace{-0.2em},x^{\sigma}))\cup \Res^{G_{z^{\sigma}\hspace{-0.2em},y^{\sigma}}}_{G_{z^{\sigma}\hspace{-0.2em},y^{\sigma}\hspace{-0.2em},x^{\sigma}}} (h_1(z^{\sigma}\hspace{-0.2em},y^{\sigma})))\\
=&\sigma^{\ast}\hspace{-0.2em}\sum_{y\in G_{x,z}\backslash G/K}\hspace{-1.2em}\Cores^{G_{z,x}}_{G_{z,y,x}}\sigma^{\ast}(\Res^{G_{y^{\sigma}\hspace{-0.2em},x^{\sigma}}}_{G_{z^{\sigma}\hspace{-0.2em},y^{\sigma}\hspace{-0.2em},x^{\sigma}}} (h_2(y^{\sigma}\hspace{-0.2em},x^{\sigma}))\cup \Res^{G_{z^{\sigma}\hspace{-0.2em},y^{\sigma}}}_{G_{z^{\sigma}\hspace{-0.2em},y^{\sigma}\hspace{-0.2em},x^{\sigma}}} (h_1(z^{\sigma}\hspace{-0.2em},y^{\sigma})))\\
=&\sigma^{\ast}\hspace{-0.2em}\sum_{y\in G_{x,z}\backslash G/K}\hspace{-1.2em}\Cores^{G_{z,x}}_{G_{z,y,x}}(\Res^{G_{y,x}}_{G_{z,y,x}}\sigma^{\ast}([g_{x^\sigma\hspace{-0.2em}, y^{\sigma}}]^{\ast} h_2(x,y))\cup \Res^{G_{z,y}}_{G_{z,y,x}} (\sigma^{\ast}[g_{y^{\sigma}\hspace{-0.2em},z^{\sigma}}]^{\ast}h_1(y,z)))
\end{align*}
The claim follows from the identity $\sigma^{\ast}[g_{x^{\sigma}\hspace{-0.2em},y^{\sigma}}]^{\ast}=[g_{x,y}]^{\ast}\sigma^{\ast}$.
\end{proof}
If $h\in \mathcal{H}(G)^{\sph}_R$ has degree $0$, then $h^{\sigma}=h$ by definition.
Hence, the degree $0$ part of the spherical derived Hecke algebra, which is just the usual spherical Hecke algebra, is commutative.
This is the classical trick by Gelfand.

As in Venkatesh's proof of the derived Satake isomorphism we want to restrict to the cohomology of the torus $\underline{T}(\mathcal{O}_F).$
The next lemma gives a criterion for when the restriction map is injective.
It is a slight variant of \cite{Ve}, Lemma 3.7.

\begin{Lem}\label{reduction}
Assume that $|\underline{G}(\F_q)/\underline{B}(\F_q)|$ is invertible in $R$.
Let $\mathcal{G}$ be a subgroup of $\underline{G}(\F_q)$ that contains $\underline{T}(\F_q)$.
Then the restriction map
$$\Res_{\underline{T}(\F_q)}^{\mathcal{G}}\colon\HH^{\ast}(\mathcal{G},R)\too \HH^{\ast}(\underline{T}(\F_q),R)$$
is injective.
\end{Lem}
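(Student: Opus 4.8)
The plan is to run the standard transfer (corestriction–restriction) argument. Recall that for a finite group $\mathcal{G}$ with subgroup $H\subseteq\mathcal{G}$ one has $\Cores^{\mathcal{G}}_{H}\circ\Res^{\mathcal{G}}_{H}=[\mathcal{G}:H]\cdot\mathrm{id}_{\HH^{\ast}(\mathcal{G},R)}$, so $\Res^{\mathcal{G}}_{H}$ is (split) injective as soon as $[\mathcal{G}:H]$ is a unit in $R$. Applying this with $H=\underline{T}(\F_q)$, the whole lemma reduces to the assertion that $[\mathcal{G}:\underline{T}(\F_q)]$ is invertible in $R$; note that all groups in sight are finite, so no continuity subtleties arise and the corestriction of the excerpt's Notations section is available.

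To see that $[\mathcal{G}:\underline{T}(\F_q)]$ is a unit I would argue as follows. Since $\underline{T}(\F_q)\subseteq\mathcal{G}\subseteq\underline{G}(\F_q)$, the integer $[\mathcal{G}:\underline{T}(\F_q)]$ divides $[\underline{G}(\F_q):\underline{T}(\F_q)]$, so it suffices to show the latter is a unit. Decompose it through the Borel: $[\underline{G}(\F_q):\underline{T}(\F_q)]=[\underline{G}(\F_q):\underline{B}(\F_q)]\cdot[\underline{B}(\F_q):\underline{T}(\F_q)]$. The first factor is invertible in $R$ by hypothesis. For the second, the short exact sequence $1\to\underline{U}(\F_q)\to\underline{B}(\F_q)\to\underline{T}(\F_q)\to 1$ gives $[\underline{B}(\F_q):\underline{T}(\F_q)]=|\underline{U}(\F_q)|$; as $\underline{U}$ is the (split) unipotent radical of a Borel it is isomorphic as a scheme to an affine space, hence $|\underline{U}(\F_q)|$ is a power of $q$, in particular a power of $p$, and therefore a unit in $R$ because $p$ is. A product of units is a unit and any divisor of a unit (coming from $\Z$) is a unit, so $[\mathcal{G}:\underline{T}(\F_q)]$ is invertible in $R$ and the transfer identity finishes the proof.

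The only step that is not purely formal is this bookkeeping of orders — specifically the standard structural facts that $\underline{B}(\F_q)$ is an extension of $\underline{T}(\F_q)$ by the $p$-group $\underline{U}(\F_q)$ and that indices are multiplicative along the chain $\underline{T}(\F_q)\subseteq\mathcal{G}\subseteq\underline{G}(\F_q)$; there is no genuine obstacle here. (If one wished to avoid invoking transfer for $\underline{T}(\F_q)$ directly, an equivalent route is to factor $\Res^{\mathcal{G}}_{\underline{T}(\F_q)}$ through $\mathcal{B}:=\mathcal{G}\cap\underline{B}(\F_q)$: the kernel of $\mathcal{B}\onto\underline{T}(\F_q)$ is a finite $p$-group with vanishing higher cohomology since $p\in R^{\times}$, so by the Hochschild–Serre spectral sequence $\Res^{\mathcal{B}}_{\underline{T}(\F_q)}$ is an isomorphism, and $\Res^{\mathcal{G}}_{\mathcal{B}}$ is injective by transfer because $[\mathcal{G}:\mathcal{B}]$ divides $|\underline{U}(\F_q)|\cdot|\underline{G}(\F_q)/\underline{B}(\F_q)|$. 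The direct argument above is shorter, so I would present that one.)
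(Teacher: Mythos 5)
Your proof is correct and is essentially the argument the paper gives: a transfer (corestriction--restriction) identity showing $\Res^{\mathcal{G}}_{\underline{T}(\F_q)}$ is injective once $[\mathcal{G}:\underline{T}(\F_q)]$, a divisor of $|\underline{G}(\F_q)/\underline{B}(\F_q)|\cdot|\underline{B}(\F_q)/\underline{T}(\F_q)|$, is seen to be a unit because the first factor is invertible by hypothesis and the second is a power of $p$. Your write-up is in fact slightly cleaner, since the paper's proof contains a small slip (it writes the composition with respect to $\underline{B}(\F_q)$ rather than $\underline{T}(\F_q)$), but the underlying reasoning is identical.
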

\begin{proof}
The composition $\Cores_{\underline{B}(\F_q)}^{\mathcal{G}}\circ\Res_{\underline{B}(\F_q)}^{\mathcal{G}}$ equals multiplication with $|\mathcal{G}/\underline{T}(\F_q)|$.
This is a divisor of
$$|\underline{G}(\F_q)/\underline{T}(\F_q)|=|\underline{G}(\F_q)/\underline{B}(\F_q)|\cdot |\underline{B}(\F_q)/\underline{T}(\F_q)|.$$
Since $|\underline{B}(\F_q)/\underline{T}(\F_q)|$ is a power of $p$, the assertion follows from our assumptions.
\end{proof}

\begin{Lem}\label{cocycle}
Let $A\subseteq \underline{T}(\mathcal{O}_F)$ be a finite group of order prime to $p$.
It acts on $\underline{U}(\mathcal{O}_F)$ via conjugation.
The non-abelian cohomology group $\HH^{1}(A,\underline{U}(\mathcal{O}_F))$ is trivial.
\end{Lem}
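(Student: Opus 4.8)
The plan is to prove that the pointed set $\HH^{1}(A,\underline{U}(\mathcal{O}_F))$ is trivial by dévissage, reducing to the elementary fact that a finite group of order prime to $p$ has no higher cohomology with values in a pro-$p$ module.

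\emph{Step 1: an $A$-stable filtration of $\underline{U}(\mathcal{O}_F)$ with additive graded pieces.} I would enumerate the positive roots $\alpha_1,\dots,\alpha_N$ compatibly with height and use the decomposition $\underline{U}=\prod_{\alpha\in\varphi^+}\underline{U}_\alpha$ of Chevalley group schemes over $\mathcal{O}_F$, where each root subgroup $\underline{U}_{\alpha_k}$ is isomorphic to $\mathbb{G}_a$. Setting $V_k=\prod_{j\ge k}\underline{U}_{\alpha_j}(\mathcal{O}_F)$, the Chevalley commutator relations give $[V_k,\underline{U}(\mathcal{O}_F)]\subseteq V_{k+1}$, so each $V_k$ is normal in $\underline{U}(\mathcal{O}_F)$; and each $V_k$ is stable under conjugation by $\underline{T}(\mathcal{O}_F)$, hence by $A$. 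Thus
$$\underline{U}(\mathcal{O}_F)=V_0\supseteq V_1\supseteq\cdots\supseteq V_N=\{1\}$$
is a chain of $A$-stable normal subgroups, and the short exact sequences $1\to V_{k+1}\to V_k\to V_k/V_{k+1}\to1$ identify $V_k/V_{k+1}$ with $\underline{U}_{\alpha_k}(\mathcal{O}_F)\cong(\mathcal{O}_F,+)$, on which $A$ acts through the unit-valued character $\alpha_k|_A$, i.e.~by scaling.

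\emph{Step 2: the graded pieces have vanishing $\HH^1$.} The module $(\mathcal{O}_F,+)=\varprojlim_n\mathcal{O}_F/\varpi^n$ is pro-$p$, and the scaling action of $A$ preserves the ideal filtration $\varpi^n\mathcal{O}_F$. For each $n$ the quotient $\mathcal{O}_F/\varpi^n$ is a finite abelian $p$-group, so $\HH^i(A,\mathcal{O}_F/\varpi^n)$ is killed both by $|A|$ and by a power of $p$; since these are prime to one another it vanishes for $i\ge1$. Passing to the inverse limit (the transition maps on continuous cochains of the finite group $A$ are surjective, so taking the limit is exact) yields $\HH^i(A,\mathcal{O}_F)=0$ for $i\ge1$; in particular $\HH^1(A,V_k/V_{k+1})=0$ for every $k$.

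\emph{Step 3: dévissage.} I would show by downward induction on $k$ that the pointed set $\HH^1(A,V_k)$ is trivial, the case $k=N$ being clear. For the inductive step, invoke the exact sequence of pointed sets in non-abelian cohomology attached to $1\to V_{k+1}\to V_k\to V_k/V_{k+1}\to1$: exactness at $\HH^1(A,V_k)$ (which does \emph{not} require $V_{k+1}$ to be central in $V_k$) says that the preimage of the distinguished point under $\HH^1(A,V_k)\to\HH^1(A,V_k/V_{k+1})$ is the image of $\HH^1(A,V_{k+1})$. By Step 2 the target is a single point, so that preimage is all of $\HH^1(A,V_k)$; by the inductive hypothesis $\HH^1(A,V_{k+1})$ is a single point, so its image is too. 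Hence $\HH^1(A,V_k)$ is trivial, and taking $k=0$ gives $\HH^1(A,\underline{U}(\mathcal{O}_F))=\{\ast\}$.

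\emph{On the main obstacle.} The only point needing genuine care is Step 1 together with the correct formulation of the long exact sequence in the non-abelian setting: one must check that the filtration is simultaneously by subgroups normal in $\underline{U}(\mathcal{O}_F)$ and stable under $A$ with one-dimensional additive graded quotients, and that the profinite structure makes the non-abelian exact sequence available. Everything else is routine. As a cross-check (and an alternative proof) one can bypass the filtration entirely: $\underline{U}(\mathcal{O}_F)$ is pro-$p$ and $A$ is finite of order prime to $p$, so the profinite Schur–Zassenhaus theorem applies to $\underline{U}(\mathcal{O}_F)\rtimes A$, all complements to $\underline{U}(\mathcal{O}_F)$ are conjugate, and $\HH^1(A,\underline{U}(\mathcal{O}_F))$ — which classifies such complements up to $\underline{U}(\mathcal{O}_F)$-conjugacy — is a single point.
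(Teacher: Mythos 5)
Your proof is correct and follows essentially the same route as the paper: an $A$-stable filtration of $\underline{U}(\mathcal{O}_F)$ with commutative pro-$p$ graded pieces, vanishing of higher cohomology of the prime-to-$p$ group $A$ with such coefficients, and induction (dévissage) via the non-abelian cohomology sequence — your version simply makes the root-group filtration and the limit argument explicit. The closing Schur--Zassenhaus remark is a legitimate alternative shortcut, but the main argument coincides with the paper's.
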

\begin{proof}
The group $\underline{U}(\mathcal{O}_F)$ has an $A$-stable filtration, whose graded pieces are commutative pro-$p$ groups.
But the higher cohomology of such groups vanishes by assumption.
Thus, the claim follows by induction.
\end{proof}

\begin{Thm}\label{theorem}
Assume that $\left|\underline{G}(\F_q)/\underline{B}(\F_q)\right|$ is invertible in $R$.
Then the spherical derived Hecke algebra $\mathcal{H}(G)^{\sph}_R$ is graded-commutative.
\end{Thm}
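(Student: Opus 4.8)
The plan is to prove that the involution $h\mapsto h^{\sigma}$ of Proposition~\ref{Gelfand} acts on the degree-$n$ part of $\mathcal{H}(G)^{\sph}_{R}$ as multiplication by $(-1)^{n}$. Granting this, graded-commutativity is formal: for homogeneous $h_{1},h_{2}$ of degrees $i,j$ the product $h_{1}\ast h_{2}$ has degree $i+j$, so by Proposition~\ref{Gelfand}
$$(-1)^{i+j}(h_{1}\ast h_{2})=(h_{1}\ast h_{2})^{\sigma}=(-1)^{ij}\,h_{2}^{\sigma}\ast h_{1}^{\sigma}=(-1)^{ij}(-1)^{i+j}(h_{2}\ast h_{1}),$$
and hence $h_{1}\ast h_{2}=(-1)^{ij}\,h_{2}\ast h_{1}$.

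To prove $h^{\sigma}=(-1)^{n}h$ for $h$ of degree $n$ I would reduce to a single computation in group cohomology. An element of $\mathcal{H}(G)^{\sph}_{R}$ is determined by its values on representatives of the $G$-orbits in $G/K\times G/K$, which by the Cartan decomposition are the pairs $(\lambda(\varpi)K,K)$ with $\lambda\in P^{+}$; moreover the defining formula shows $h^{\sigma}(\lambda(\varpi)K,K)$ depends only on $h(\lambda(\varpi)K,K)$. Fix $\lambda\in P^{+}$, put $t=\lambda(\varpi)$ and $G_{0}=G_{tK,K}=tKt^{-1}\cap K$. Since $t^{-1}.(tK,K)=(K,t^{-1}K)=(K^{\sigma},(tK)^{\sigma})$, one may take $g_{tK,K}=t^{-1}$, and unwinding the definition gives $h^{\sigma}(tK,K)=\psi^{\ast}(h(tK,K))$, where $\psi\colon G_{0}\too G_{0}$ is the automorphism $g\mapstoo\sigma(t^{-1}gt)$. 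So the claim becomes: $\psi^{\ast}$ acts on $\HH^{n}(G_{0},R)$ by $(-1)^{n}$.

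The key observation is that $\psi$ preserves $\underline{T}(\mathcal{O}_{F})=T\cap K\subseteq G_{0}$ and restricts there to the inversion $s\mapstoo s^{-1}$, because $t^{-1}st=s$ and $\sigma(s)=s^{-1}$ for $s\in T$. I would then show the restriction map $\Res^{G_{0}}_{\underline{T}(\mathcal{O}_{F})}\colon\HH^{\ast}(G_{0},R)\too\HH^{\ast}(\underline{T}(\mathcal{O}_{F}),R)$ is injective: reduction modulo $\varpi$ realises $G_{0}$ as an extension of a subgroup $\mathcal{G}\subseteq\underline{G}(\F_{q})$ with $\underline{T}(\F_{q})\subseteq\mathcal{G}$ by a pro-$p$ group; since $p$ is invertible in $R$, the inflation maps $\HH^{\ast}(\mathcal{G},R)\to\HH^{\ast}(G_{0},R)$ and $\HH^{\ast}(\underline{T}(\F_{q}),R)\to\HH^{\ast}(\underline{T}(\mathcal{O}_{F}),R)$ are isomorphisms compatible with restriction, and $\HH^{\ast}(\mathcal{G},R)\to\HH^{\ast}(\underline{T}(\F_{q}),R)$ is injective by Lemma~\ref{reduction}; this is the point at which the hypothesis on $\lvert\underline{G}(\F_{q})/\underline{B}(\F_{q})\rvert$, together with Lemmas~\ref{reduction} and~\ref{cocycle} (the latter controlling the relevant pro-$p$/unipotent directions), is used. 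Combining this injectivity with the compatibility of restriction and pullback from Section~2 (applied to $\varphi=\psi$ and the subgroup $\underline{T}(\mathcal{O}_{F})$, noting $\psi(\underline{T}(\mathcal{O}_{F}))=\underline{T}(\mathcal{O}_{F})$), one gets $\Res^{G_{0}}_{\underline{T}(\mathcal{O}_{F})}\circ\psi^{\ast}=\inv^{\ast}\circ\Res^{G_{0}}_{\underline{T}(\mathcal{O}_{F})}$ with $\inv$ the inversion of $\underline{T}(\mathcal{O}_{F})$; and since $\underline{T}(\mathcal{O}_{F})$ is abelian, $\inv^{\ast}$ acts on $\HH^{n}$ by $(-1)^{n}$ (a standard fact, reduced by a colimit argument to finite abelian quotients and there to the cyclic case via Künneth). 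Injectivity of $\Res^{G_{0}}_{\underline{T}(\mathcal{O}_{F})}$ then forces $\psi^{\ast}=(-1)^{n}$ on $\HH^{n}(G_{0},R)$, which completes the plan.

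The main obstacle is the injectivity of the restriction map $\HH^{\ast}(G_{0},R)\to\HH^{\ast}(\underline{T}(\mathcal{O}_{F}),R)$: one needs to analyse the parahoric-type group $G_{0}=tKt^{-1}\cap K$ carefully enough — that reduction modulo $\varpi$ surjects it onto a subgroup of $\underline{G}(\F_{q})$ containing $\underline{T}(\F_{q})$ with pro-$p$ kernel, and that continuous cohomology of pro-$p$ groups with $R$-coefficients vanishes in positive degrees because $p\in R^{\times}$ — so that Lemma~\ref{reduction} (and Lemma~\ref{cocycle}) can be brought to bear. Everything else is formal manipulation with cup products, pullbacks, the Cartan decomposition, and the sign bookkeeping above.
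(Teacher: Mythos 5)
Your formal reduction (deducing graded-commutativity from Proposition \ref{Gelfand} once one knows how $h\mapsto h^{\sigma}$ acts), the Cartan reduction to the pairs $(tK,K)$ with the choice $g_{t,1}=t^{-1}$, and the injectivity of $\Res^{G_{0}}_{\underline{T}(\mathcal{O}_F)}$ under the hypothesis are all fine and parallel the first steps of the paper's argument. The proposal breaks at the asserted ``standard fact'' that inversion acts on $\HH^{n}(A,R)$ of an abelian (pro)finite group by $(-1)^{n}$: this is false beyond degree $1$. Already for $C=\Z/l^{v}$ with $l$ odd and $R=\Z/l$ one has $\HH^{\ast}(C,\Z/l)=\Lambda(y)\otimes\F_l[x]$ with $\deg y=1$ and $x=\beta(y)$ the Bockstein; naturality of $\beta$ gives $\inv^{\ast}(x)=-x$, so $\inv^{\ast}$ acts on $\HH^{n}$ by $(-1)^{\lceil n/2\rceil}$ (in particular by $-1$, not $+1$, on $\HH^{2}$), and on a product of cyclic groups it is not even a scalar (it is $-1$ on the $\Ext$-part of $\HH^{2}$ and $+1$ on the $\Lambda^{2}\Hom$-part). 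Hence the correct injectivity of the restriction to $A$ cannot yield $\psi^{\ast}=(-1)^{n}$ on $\HH^{n}(G_{0},R)$.

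In fact the intermediate claim $h^{\sigma}=(-1)^{n}h$ is itself false under the hypotheses of the theorem, so no repair of the sign computation can save this route. Take $\underline{G}=\SL_2$ with $\sigma(g)=(g^{t})^{-1}$; then $\sigma=\Ad(w)$ with $w=\left(\begin{smallmatrix}0&1\\-1&0\end{smallmatrix}\right)\in K$, so for $h$ supported on the diagonal $G$-orbit one may take $g_{1,1}=1$ and gets $h^{\sigma}(1,1)=\sigma^{\ast}h(1,1)=h(1,1)$, since pullback along an inner automorphism of $K$ is the identity on $\HH^{\ast}(K,R)$. For $l$ odd dividing $q-1$ one has $\HH^{3}(K,\Z/l)\cong\HH^{3}(\SL_2(\F_q),\Z/l)\neq 0$, and $\Z/l$ satisfies the hypothesis ($l\nmid q+1$), so there are odd-degree elements with $h^{\sigma}=h\neq-h$. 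This is precisely the phenomenon the paper flags in the introduction: the involution need not act by a sign in higher degrees. For this reason the paper proves the weaker multiplicativity statement $(h_1\ast h_2)^{\sigma}=h_1^{\sigma}\ast h_2^{\sigma}$, which together with Proposition \ref{Gelfand} still gives graded-commutativity; and the genuinely hard point, which your route never confronts, is controlling $h^{\sigma}$ at the cross terms $(t,y')$ in the product formula \eqref{product}, where the stabilizer $A_{y'}\subseteq A$ is a proper subgroup. There one must produce a representative $g$ centralizing the relevant Sylow subgroup $A_{x,l}$ — via the Iwasawa decomposition, Lemma \ref{cocycle}, Lemma \ref{Step1} and the double-centralizer/Levi argument — and that is where the hypothesis on $\left|\underline{G}(\F_q)/\underline{B}(\F_q)\right|$ does its real work.
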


\begin{proof}
The theorem follows from Proposition \ref{Gelfand} and the following claim:
we have
$$(h_1\ast h_2)^{\sigma}=  h_1^{\sigma}\ast  h_2^{\sigma}.$$
By the Cartan decomposition it is enough to check the identity
\begin{align}\label{toshow}
(h_1\ast h_2)^{\sigma}(t,1)=  (h_1^{\sigma}\ast  h_2^{\sigma})(t,1)
\end{align}
in $\HH^{\ast}(G_{t,1},R)$ for all $t\in T$.
Put $A=G_{t,1}\cap T=\underline{T}(\mathcal{O}_F).$
The kernel of the reduction map $A\onto \underline{T}(\F_q)$ is a pro-p group.
Hence, by Lemma \ref{reduction} and our running assumption that $p$ is invertible in $R$ the restriction map
$$\Res_{A}^{G_{t,1}}\colon \HH^{\ast}(G_{t,1},R)\too \HH^{\ast}(A,R)$$
is injective and, thus, it is enough to prove \eqref{toshow} after taking restriction to the cohomology of $A$.

We have $t^{-1}.(t,1)=(1,t^{-1})=(1,t^\sigma)$ and thus, we may choose $g_{t,1}=t^{-1}.$
Every element of $A$ commutes with $t^{-1}$ and, by definition, the involution $\sigma$ induces the inversion map
$$\inv\colon A\too A,\quad a\mapstoo a^{-1}$$ on $A$.
Therefore, the diagram
\begin{center}
\begin{tikzpicture}
    \path 	(0,0) 	node[name=A]{$\HH^{\ast}(G_{t,1},R)$}
		(6,0) 	node[name=B]{$\HH^{\ast}(G_{t,1},R)$}
		(0,-1.5) 	node[name=C]{$\HH^{\ast}(A,R)$}
		(6,-1.5) 	node[name=D]{$\HH^{\ast}(A,R)$};
    \draw[->] (A) -- (C) node[midway, left]{$\Res_A^{G_{t,1}}$};
    \draw[->] (A) -- (B) node[midway, above]{$[g_{t,1}]^{\ast}\sigma^{\ast}$};
    \draw[->] (C) -- (D) node[midway, above]{$\inv^{\ast}$};
    \draw[->] (B) -- (D) node[midway, right]{$\Res_A^{G_{t,1}}$};
\end{tikzpicture} 
\end{center}
is commutative.

Plugging in the explicit formula \eqref{product} for the product we get
\begin{align*}
&\Res_A^{G_{t,1}}((h_1\ast h_2)^{\sigma}(t,1))
=\inv^{\ast} (\Res_A^{G_{t,1}} ((h_1\ast h_2)(t,1)))\\
=&\sum_{y\in G_{t1}\backslash G/K} \inv^{\ast} (\Res_A^{G_{t,1}} (\Cores^{G_{t,1}}_{G_{t,y,1}}(\Res^{G_{t,y}}_{G_{t,y,1}} (h_1(t,y))\cup \Res^{G_{y,1}}_{G_{t,y,1}} (h_2(y,1))))).
\end{align*}
By Proposition 1.5.6 of \cite{NSW} (compare also with equation (46) of \cite{Ve}) we may write
\begin{align}\label{multres}
\begin{split}
&\inv^{\ast} (\Res_A^{G_{t,1}} (\Cores^{G_{t,1}}_{G_{t,y,1}}(\Res^{G_{t,y}}_{G_{t,y,1}} (h_1(t,y))\cup \Res^{G_{y,1}}_{G_{t,y,1}} (h_2(y,1)))))\\
=&\sum_{y^{\prime}} \inv^{\ast} (\Cores^{A}_{A_{y^{\prime}}}(\Res^{G_{t,y^{\prime}}}_{A_{y^\prime}} (h_1(t,y^{\prime}))\cup \Res^{G_{y^{\prime}\hspace{-0.1em},1}}_{A_{y^\prime}} (h_2(y^{\prime}\hspace{-0.1em},1))))\\
=&\sum_{y^{\prime}} \Cores^{A}_{A_{y^{\prime}}}(\inv^{\ast} (\Res^{G_{t,y^{\prime}}}_{A_{y^\prime}} (h_1(t,y^{\prime}))\cup \Res^{G_{y^{\prime}\hspace{-0.1em},1}}_{A_{y^\prime}} (h_2(y^{\prime}\hspace{-0.1em},1)))),
\end{split}
\end{align} 
where $y^{\prime}$ runs through a set of representatives of $A$-orbits in $G_{t1}y\subseteq G/K$ and $A_{y^\prime}$ denotes the stabilizer of $y^{\prime}$ in $A$.
By a similar computation we see that
$$\Res_A^{G_{t,1}}((h_1^\sigma \ast h_2^\sigma )(t,1))=  \sum_{y}\sum_{y^{\prime}} \Cores^{A}_{A_{y^{\prime}}}( (\Res^{G_{t,y^{\prime}}}_{A_{y^\prime}} (h_1^{\sigma}(t,y^{\prime}))\cup \Res^{G_{y^{\prime}\hspace{-0.1em},1}}_{A_{y^\prime}} (h_2^{\sigma}(y^{\prime}\hspace{-0.1em},1)))),$$
where $y$ and $y'$ run through the same sets as above.

Thus, it is enough to prove the following identities for all $x\in G/K$, $t\in T$ and $h\in\mathcal{H}(G)^{\sph}_R$:
\begin{align*}
\inv^{\ast}(\Res^{G_{t,x}}_{A_{x}} (h(t,x)))&=\Res^{G_{t,x}}_{A_{x}}(h^{\sigma}(t,x))\\
\intertext{and}
\inv^{\ast}(\Res^{G_{t,x}}_{A_{x}} (h(x,t)))&=\Res^{G_{t,x}}_{A_{x}}(h^{\sigma}(x,t)).
\end{align*}

We only prove the first equality, the proof of the second being identical.
By taking pullback along $\Ad(t)$ we may reduce to the case $t=1$.
Let $l$ be a prime dividing the order of $T(\F_q)$ but not dividing $|\underline{G}(\F_q)/\underline{B}(\F_q)|$.
We write $A_{x,l}\subseteq A_x$ for the $l$-Sylow subgroup of $A_x$.
It is enough to show that
\begin{align}\label{lastclaim}
\inv^{\ast}(\Res^{G_{1,x}}_{A_{x,l}} (h(1,x)))=\Res^{G_{1,x}}_{A_{x,l}}(h^{\sigma}(1,x))
\end{align}
holds for each such $l$ because $A_x$ is the product of the groups $A_{x,l}$ and a profinite group, whose order is invertible in $R$.

In order to prove \eqref{lastclaim} it is enough to show that we can find a matrix $g_{1,x}$ for the pair $(1,x)$ that lies in the centralizer of $A_{x,l}.$
This is the content of the next two lemmas.
\end{proof}

\begin{Lem}\label{Step1}
There exists a representative $g\in G$ of $x$ that lies in the centralizer of $A_{x,l}$.
\end{Lem}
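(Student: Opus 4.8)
The plan is to build a representative of $x$ inside the Borel subgroup $B$, so that the only obstruction to finding one that centralizes $A_{x,l}$ becomes a non-abelian cohomology class which vanishes by Lemma~\ref{cocycle}.

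First I would invoke the Iwasawa decomposition $G=BK=UTK$ to write $x=utK$ with $u\in U$ and $t\in T$. Recall that $A_{x,l}\subseteq A=\underline{T}(\mathcal{O}_F)$ is a finite group of order prime to $p$ (it is an $l$-group with $l\neq p$); each $a\in A_{x,l}$ lies in $T\cap K$, commutes with $t$ because $T$ is abelian, and normalizes $\underline{U}(\mathcal{O}_F)$. Hence $A_{x,l}$ normalizes the compact open subgroup $W:=t\,\underline{U}(\mathcal{O}_F)\,t^{-1}\subseteq U$. Unwinding $a\cdot x=x$ for $a\in A_{x,l}$ yields $t^{-1}u^{-1}(aua^{-1})t\in U\cap K=\underline{U}(\mathcal{O}_F)$, i.e.
$$w_a:=u^{-1}(aua^{-1})\in W\qquad\text{for all }a\in A_{x,l}.$$
A short computation with $(ab)u(ab)^{-1}=a(uw_b)a^{-1}$ shows that $a\mapsto w_a$ is a $1$-cocycle of $A_{x,l}$ valued in $W$ for the conjugation action, and that the left coset $uW\subseteq U$ is stable under conjugation by $A_{x,l}$ (it is a $W$-torsor with compatible $A_{x,l}$-action, base point $u$ giving back the cocycle $a\mapsto w_a$). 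If the class of this cocycle in $\HH^{1}(A_{x,l},W)$ is trivial, there is a fixed point $u'\in uW$ of the conjugation action, i.e.\ an element $u'$ centralizing $A_{x,l}$. Then $g:=u't$ is the representative we want: it centralizes $A_{x,l}$ because $t$ does as well, and writing $u'=uw$ with $w=tu_0t^{-1}$, $u_0\in\underline{U}(\mathcal{O}_F)\subseteq K$, gives $g=utu_0\in utK$, so $gK=x$.

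It remains to see that $\HH^{1}(A_{x,l},W)$ is trivial. Since $t$ centralizes $A_{x,l}$, conjugation by $t$ is an $A_{x,l}$-equivariant isomorphism $\Ad(t)\colon\underline{U}(\mathcal{O}_F)\xrightarrow{\ \sim\ }W$ for the conjugation actions, because $t(ava^{-1})t^{-1}=a(tvt^{-1})a^{-1}$. Therefore $\HH^{1}(A_{x,l},W)\cong\HH^{1}(A_{x,l},\underline{U}(\mathcal{O}_F))$, and the right-hand side vanishes by Lemma~\ref{cocycle} applied to the finite prime-to-$p$ group $A_{x,l}\subseteq\underline{T}(\mathcal{O}_F)$. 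I expect the only delicate point to be the bookkeeping in the middle step --- checking that $A_{x,l}$ genuinely normalizes $W$, that $a\mapsto w_a$ satisfies the cocycle identity, and that $\Ad(t)$ is equivariant --- each of which is routine but should be carried out with care; once these are in place, the vanishing and hence the existence of $g$ follow at once from Lemma~\ref{cocycle}.
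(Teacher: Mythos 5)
Your proof is correct and takes essentially the same route as the paper: Iwasawa decomposition plus the vanishing of the non-abelian $\HH^{1}$ of $A_{x,l}$ with values in the integral unipotent radical (Lemma~\ref{cocycle}). The paper just writes $x=bK$ with $b\in B$ and uses the cocycle $a\mapsto b^{-1}aba^{-1}$, which lands directly in $\underline{U}(\mathcal{O}_F)$ and is exactly your $w_a$ transported by $\Ad(t^{-1})$, so your detour through $W=t\,\underline{U}(\mathcal{O}_F)\,t^{-1}$ is harmless but unnecessary.
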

\begin{proof}
By the Iwasawa decomposition there exists an element $b\in B$ such that $bK=x$.
We define $u_a\in \underline{U}(F)$ by the identity $aba^{-1}=bu_{a}$.
Since $A_{x,l}$ fixes $x$ and $A_{x,l}$ is a subgroup of $K$, we see that $u_a$ is in fact an element of $\underline{U}(\mathcal{O}_F)=\underline{U}(F)\cap K.$
The assignment $a\mapsto u_a$ defines a $1$-cocycle of $A_{x,l}$ with values in $\underline{U}(\mathcal{O}_F)$.
Lemma \ref{cocycle} implies that there exists an element $u\in \underline{U}(\mathcal{O}_F)$ such that $u_a=u^{-1}aua^{-1}$ for all $a\in A_{x,l}.$
The element $g=bu^{-1}\in B$ satisfies $gK=x$ and $aga^{-1}=g$ for all $a\in A_{x,l}$.
\end{proof}

\begin{Lem}
There exists a choice for $g_{1,x}$ that lies in the centralizer of $A_{x,l}$.
\end{Lem}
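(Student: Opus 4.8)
The plan is to run the construction behind Corollary~\ref{corollary} not inside $G$ but inside the reductive subgroup cut out by $A:=A_{x,l}$, so that the resulting element $g_{1,x}$ automatically centralizes $A$.

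First I would introduce the centralizer. Since $l\neq p$ and the reduction map $\underline T(\mathcal O_F)\onto\underline T(\F_q)$ has pro-$p$ kernel, the finite $l$-group $A$ injects into $\underline T(\F_q)$, so $|A|$ is prime to $p$. Hence the centralizer $\mathcal Z$ of $A$ in $\underline G$, viewed as a closed $\mathcal O_F$-subgroup scheme, is smooth with reductive fibres, and its identity component $\mathcal M$ is a connected split reductive group scheme over $\mathcal O_F$ containing $\underline T$ as a maximal torus. Write $M=\mathcal M(F)$ and $K_M=\mathcal M(\mathcal O_F)=M\cap K$. Choosing $\sigma$ to be (the $F$-points of) an automorphism of the $\mathcal O_F$-group scheme $\underline G$ — which is possible for the standard construction — and using that $\sigma$ inverts $\underline T$ and therefore preserves $A$, one sees that $\sigma$ restricts to an involution $\sigma_M$ of $M$ with $\sigma_M(K_M)=K_M$ and $\sigma_M(t)=t^{-1}$ for $t\in\underline T(F)$. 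Thus $(M,\underline T,\sigma_M)$ satisfies the hypotheses of the Cartan decomposition, and Corollary~\ref{corollary} applies to $M$ verbatim (its proof uses only that proposition, so there is no circularity with Theorem~\ref{theorem}).

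Second I would put $x$ into $M$. By the proof of Lemma~\ref{Step1} there is $g\in B$ with $gK=x$ and $aga^{-1}=g$ for all $a\in A$. Writing $g=t_g u_g$ with $t_g\in T$, $u_g\in\underline U(F)$ and using that $A\subseteq T$ is abelian, $A$-invariance of $g$ forces $u_g$ to be $A$-invariant, i.e. $u_g\in\mathcal Z(F)$; since $u_g$ is unipotent and $F$ has characteristic zero it lies in the identity component, so $g=t_g u_g\in M$. Applying Corollary~\ref{corollary} in $M$ to the pair $(1,gK_M)\in M/K_M\times M/K_M$ produces $\gamma\in M$ with $\gamma.(1,gK_M)=\bigl((gK_M)^{\sigma_M},1\bigr)$, that is $\gamma g\in K_M$ and $\gamma K_M=\sigma(g)K_M$. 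Since $K_M\subseteq K$ and $\sigma_M$ is the restriction of $\sigma$, reading these relations in $G/K\times G/K$ gives $\gamma.(1,x)=(\gamma K,\gamma gK)=(\sigma(g)K,K)=(x^\sigma,1)$. Hence $\gamma$ is a legitimate choice for $g_{1,x}$, and it lies in $M\subseteq\mathcal Z(F)$, i.e. it centralizes $A$.

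The part I expect to be the main obstacle is the structure-theoretic input: that the centralizer of the prime-to-$p$ subgroup $A$ is a smooth $\mathcal O_F$-group scheme whose identity component is connected split reductive and still contains $\underline T$, and that $\sigma$ may be taken algebraic over $\mathcal O_F$ so as to descend to $\mathcal M$. Granting these, the rest is formal; the verification that $g\in M$ and the identification of $M/K_M$-cosets with $G/K$-cosets are routine.
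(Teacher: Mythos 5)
Your proof is correct in outline and shares the paper's skeleton --- put a representative of $x$ into the centralizer of $A_{x,l}$ via Lemma \ref{Step1}, then rerun Corollary \ref{corollary} inside that centralizer --- but it diverges at the crucial structural step. The paper uses the running hypotheses ($l\mid q-1$ and $|\underline{G}(\F_q)/\underline{B}(\F_q)|\equiv |W| \bmod l$ invertible) through the proof of Venkatesh's Lemma 3.9, plus a double-centralizer argument and Borel--Tits, to show that $Z_G(A_{x,l})$ is \emph{connected} and in fact the group of $F$-points of a Levi subgroup $\underline{M}\supseteq\underline{T}$; for a Levi the Cartan decomposition with respect to $M\cap K$ and the $\sigma$-stability are immediate, and no integral fixed-point theory or algebraicity of $\sigma$ is needed (abstract $\sigma$-stability of the full centralizer suffices, and Lemma \ref{Step1} already lands the representative there without any discussion of identity components). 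You instead work with the fibrewise identity component $\mathcal{M}$ of the fixed-point scheme over $\mathcal{O}_F$, which forces the three extra ingredients you correctly identify: smoothness of tame (prime-to-$p$) fixed points and the fact that $\mathcal{M}$ is a split reductive $\mathcal{O}_F$-group scheme containing $\underline{T}$ (true, but nontrivial --- this is essentially Edixhoven/Fogarty plus Prasad--Yu, and one also needs that the open subgroup scheme $\mathcal{M}$ is affine, which over a DVR follows e.g.\ from Raynaud; cite these rather than assert them, and note the fibres of the full centralizer need not be reductive, only their identity components, as your own passage to $\mathcal{M}$ implicitly acknowledges); the choice of $\sigma$ as an $\mathcal{O}_F$-group-scheme automorphism (harmless, since the Chevalley involution attached to an $\mathcal{O}_F$-pinning has this property, but it is an extra condition beyond the Proposition as stated and should be fixed at the outset); and your argument that $g=t_gu_g$ lies in $\mathcal{M}(F)$, which is correct and genuinely needed in your setup (the one-parameter subgroup through the unipotent $u_g$ in characteristic zero). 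Granting those citations your proof is complete, and it buys something the paper's does not: it nowhere uses $l\nmid|W|$, so it establishes the lemma in greater generality (e.g.\ it also covers disconnected centralizers such as $Z_{\PGL_2}$ of an element of order $2$), whereas the paper's argument is lighter on machinery but tied to the standing hypothesis. Two small remarks: the equality $\mathcal{M}(\mathcal{O}_F)=M\cap K$ you assert is not actually needed (only $\mathcal{M}(\mathcal{O}_F)\subseteq K$ is used when you read the relation $\gamma.(1,gK_M)=((gK_M)^{\sigma_M},1)$ in $G/K$), and your re-derivation that $u_g$ centralizes $A_{x,l}$ is fine but duplicates part of Lemma \ref{Step1}; the only genuinely new point there is membership in the identity component.
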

\begin{proof}
Since $l$ divides $(q-1)$, the Bruhat decomposition implies that $|\underline{G}(\F_q)/\underline{B}(\F_q)|$ equals the order of the Weyl group of $G$ modulo $l$.
Thus, by the proof of \cite{Ve}, Lemma 3.9, the centralizer $\underline{Z}(A_{x,l})$ of the finite group $A_{x,l}$ in $\underline{G}$ is a connected, reductive subgroup.
Let $\underline{S}^{0}\subseteq \underline{G}$ be the connected component of the double centralizer of $A_{x,l}$.
By \textit{loc.cit.~}we know that $\underline{S}^{0}$ is a central subgroup of $\underline{Z}(A_{x,l})$ and that $A_{x,l}$ is a subgroup of $\underline{S}^{0}$ (see \cite{Ve}, Lemma 3.9 (a) for the second claim.)
In particular, the group $\underline{Z}(A_{x,l})$ is also the centralizer of $\underline{S}^{0}$.

As $A_{x,l}$ is a subgroup of $\underline{T}$, its double centralizer, and therefore also $\underline{S}^{0}$, is a subgroup of the double centralizer of $\underline{T}$, which is $\underline{T}$ itself.
Thus, $\underline{S}^{0}$ is a connected subgroup of a a split torus and consequently a split torus itself.
By \cite{BorelTits} Theorem 4.15 (a), the centralizer of a split torus inside a reductive group is a Levi subgroup.
To summarize, we have shown that the centralizer of $A_{x,l}$ in $G$ is the group $M=\underline{M}(F)$ of $F$-valued point of a Levi subgroup $\underline{M}\subseteq\underline{G}$.

The group $A_{x,l}$ is stable under the involution $\sigma$ and therefore its centralizer $M$ is also stable under it.
By Lemma \ref{Step1} we may choose a representative of $x$ that lies in $M$.
Applying Corollary \ref{corollary} to $M$ we see that we may choose $g_{1,x}\in M$, which proves the claim. 
\end{proof}

As a direct consequence of our main theorem we get:
\begin{Cor}\label{exponent}
Let $l$ be a prime such that
\begin{itemize}
\item $q\equiv 1 \bmod l$ and
\item $l$ does not divide the order of the Weyl group of $G$.
\end{itemize}
Then $\mathcal{H}(G)^{\sph}_{\Z/l^r\Z}$ is graded-commutative for all $r$.
\end{Cor}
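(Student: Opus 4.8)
The plan is to reduce the corollary directly to Theorem \ref{theorem} by checking that the number $\left|\underline{G}(\F_q)/\underline{B}(\F_q)\right|$ is invertible in $R=\Z/l^{r}\Z$; since $R$ is a quotient of $\Z$ localized at $l$ (more precisely a local ring with residue field $\F_l$), invertibility is equivalent to $l\nmid\left|\underline{G}(\F_q)/\underline{B}(\F_q)\right|$. Along the way one also has to verify the standing hypothesis of Section 4, namely that $p$ is invertible in $R$; but this is immediate, because $q$ is a power of $p$ and $q\equiv1\bmod l$ forces $l\neq p$.

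The key computation is to evaluate $\left|\underline{G}(\F_q)/\underline{B}(\F_q)\right|$ modulo $l$ using the Bruhat decomposition, exactly as in the proof of the last lemma above. Writing $W$ for the Weyl group of $G$ and $\ell$ for its length function, the Bruhat decomposition $\underline{G}(\F_q)=\coprod_{w\in W}\underline{B}(\F_q)\,w\,\underline{B}(\F_q)$ yields
$$\left|\underline{G}(\F_q)/\underline{B}(\F_q)\right|=\sum_{w\in W}q^{\ell(w)}.$$
Because $q\equiv1\bmod l$, every summand is congruent to $1$ modulo $l$, so the sum is congruent to $|W|$ modulo $l$. By hypothesis $l\nmid|W|$, hence $l\nmid\left|\underline{G}(\F_q)/\underline{B}(\F_q)\right|$, and therefore this integer is a unit in $\Z/l^{r}\Z$ for every $r$.

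With both hypotheses of Theorem \ref{theorem} verified for $R=\Z/l^{r}\Z$, that theorem gives that $\mathcal{H}(G)^{\sph}_{\Z/l^{r}\Z}$ is graded-commutative, which is the assertion. I do not expect any genuine obstacle here: the argument is a short deduction from Theorem \ref{theorem}, and the only two points needing (minimal) care are the observation $l\neq p$ and the congruence $\left|\underline{G}(\F_q)/\underline{B}(\F_q)\right|\equiv|W|\bmod l$ coming from the Bruhat decomposition — both of which already appear in the body of the paper.
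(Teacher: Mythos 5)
Your proposal is correct and follows the paper's intended argument exactly: the paper states the corollary as a direct consequence of Theorem \ref{theorem}, relying on the same Bruhat-decomposition congruence $\left|\underline{G}(\F_q)/\underline{B}(\F_q)\right|\equiv|W| \bmod l$ that appears in the proof of the final lemma of Section 4. The only addition you make is the (correct and harmless) remark that $q\equiv 1\bmod l$ forces $l\neq p$, so $p$ is invertible in $\Z/l^r\Z$.
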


\begin{Rem}
Let $A_l$ be the $l$-Sylow subgroup of $A$.
Under the additional condition $q \equiv 1 \bmod l^r$ Venkatesh shows that the corestriction map
$$
\Cores^{A_l}_{A_{x,l}}\colon \HH^{\ast}(A_{x,l}, \Z/l^r\Z) \too \HH^{\ast}(A_l, \Z/l^r\Z)
$$
is the zero map whenever the subgroup $A_{x,l}\subseteq A_l$ is a proper subgroup (see \cite{Ve}, Lemma 3.10).
This is a major step in proving the derived Satake isomorphism.
The advantage of our approach is that we can deduce graded-commutativity without assuming the vanishing of the terms corresponding to proper subgroups $A_{y^\prime,l} \subsetneq A_l$ in \eqref{multres}.
\end{Rem}

\section{The case $\GL_2$}
As before, $R$ denotes a commutative ring in which $p$ is invertible.
In Section 2.4 of \cite{Ve} Venkatesh argues that the spherical derived Hecke algebra is `largest' if the equality $q-1= 0$ holds in the coefficient ring $R$.
The following proposition shows graded-commutativity in the opposite case: the hypothesis in the proposition ensures that the derived Hecke algebra is as `small' as possible while still being potentially non-trivially derived.

Combining the proposition - in which we still consider an arbitrary connected, split reductive group - with the results of the previous section we deduce graded-commutativity of the spherical derived Hecke-algebra of the group $\GL_2$ for most torsion coefficients (see Corollary \ref{cor} below.)
 
\begin{Pro}\label{trivial}
Suppose that the order of every proper parabolic subgroup $P\subsetneq\underline{G}(\F_q)$ is invertible in $R$.
Then the spherical derived Hecke algebra $\mathcal{H}(G)^{\sph}_{R}$ is graded-commutative.
\end{Pro}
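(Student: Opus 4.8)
The plan is to show that the hypothesis forces the positive-degree part of $\mathcal{H}(G)^{\sph}_{R}$ to be supported on the finitely many ``central'' double cosets, and that on these the multiplication degenerates into the graded-commutative cup product of $\HH^{\ast}(K,R)$ convolved over an abelian group. To set up, let $\underline{Z}$ be the identity component of the centre of $\underline{G}$; since $\underline{G}$ is split, $\underline{Z}$ is a split torus, and I put $\Lambda$ equal to the cocharacter lattice of $\underline{Z}$, regarded as a subgroup of $X_{\bullet}$. A cocharacter lies in $\Lambda$ precisely when it pairs to zero with every root; in particular $\Lambda\subseteq P^{+}$, and for $\mu\in\Lambda$ the element $\mu(\varpi)$ is central in $G$, so that $G_{\mu(\varpi),1}=K$. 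Via the Cartan decomposition I identify $K\backslash G/K$ with $P^{+}$ and call the classes of the $\mu(\varpi)$, $\mu\in\Lambda$, the central double cosets.

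The technical heart is the vanishing statement: if $\lambda\in P^{+}$ is \emph{not} central, then $\HH^{i}(G_{\lambda(\varpi),1},R)=0$ for all $i>0$. Indeed $G_{\lambda(\varpi),1}=K\cap\lambda(\varpi)K\lambda(\varpi)^{-1}$, the reduction map $K\to\underline{G}(\F_{q})$ has pro-$p$ kernel, and a standard computation with root subgroups (alternatively, the description of intersections of hyperspecial parahorics in Bruhat--Tits theory) shows that the image of $G_{\lambda(\varpi),1}$ in $\underline{G}(\F_{q})$ lies inside a parabolic subgroup, which is \emph{proper} exactly because $\lambda$ is not central. By hypothesis the order of that proper parabolic, hence of any of its subgroups, is invertible in $R$, so such a subgroup has vanishing higher cohomology; together with the invertibility of $p$ this proves the claim. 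Conjugating, $\HH^{>0}(G_{x,y},R)=0$ whenever $(x,y)$ has non-central Cartan type. Consequently, any homogeneous $h\in\mathcal{H}(G)^{\sph}_{R}$ of positive degree vanishes at $(x,y)$ unless $G.(x,y)=G.(\mu(\varpi),1)$ for some $\mu\in\Lambda$, and then $h(\mu(\varpi),1)\in\HH^{>0}(K,R)$.

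It remains to compute products. If $h_{1}$ and $h_{2}$ both have degree $0$ there is nothing new to prove: as noted after Proposition \ref{Gelfand}, the degree-$0$ part of $\mathcal{H}(G)^{\sph}_{R}$ is the classical spherical Hecke algebra, which is commutative. So let $h_{1},h_{2}$ be homogeneous of degrees $i,j$ with $i+j>0$. Then $h_{1}\ast h_{2}$ and $h_{2}\ast h_{1}$ have positive degree, hence are supported on central double cosets by the previous paragraph, so it suffices to compare their values at $(\mu(\varpi),1)$ for $\mu\in\Lambda$. In the sum \eqref{product} for $(h_{1}\ast h_{2})(\mu(\varpi),1)$, at least one of the factors $h_{1}(\mu(\varpi),y)$, $h_{2}(y,1)$ has positive degree (as $i+j>0$), and by the previous paragraph that factor vanishes unless the double coset $y$ is the class of $\nu(\varpi)$ for some $\nu\in\Lambda$; so only such $y$ contribute. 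For such $y$ all of the subgroups $G_{\mu(\varpi),1}$, $G_{\mu(\varpi),\nu(\varpi)}$, $G_{\mu(\varpi),\nu(\varpi),1}$ and $G_{\nu(\varpi),1}$ are equal to $K$ (every relevant group element is central), so the restrictions and corestrictions in \eqref{product} are identities; moreover conjugation by the central element $\nu(\varpi)$ is the identity automorphism of $K$, so $[\nu(\varpi)^{-1}]^{\ast}$ acts as the identity on $\HH^{\ast}(K,R)$ and $G$-invariance of $h_{1}$ gives $h_{1}(\mu(\varpi),\nu(\varpi))=h_{1}((\mu-\nu)(\varpi),1)$. Therefore
$$(h_{1}\ast h_{2})(\mu(\varpi),1)=\sum_{\nu\in\Lambda}h_{1}((\mu-\nu)(\varpi),1)\cup h_{2}(\nu(\varpi),1),$$
the cup product being taken in the graded-commutative ring $\HH^{\ast}(K,R)$ (Proposition \ref{group}); note that when $i=0$ or $j=0$ the corresponding factor lies in $\HH^{0}(K,R)=R$ and the cup product is scalar multiplication. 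Re-indexing $\nu\mapsto\mu-\nu$ and applying graded-commutativity of $\HH^{\ast}(K,R)$ turns the right-hand side into $(-1)^{ij}(h_{2}\ast h_{1})(\mu(\varpi),1)$, which is the required identity, and the proof is complete.

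The step I expect to be the main obstacle is the vanishing statement, and within it the assertion that the reduction of $G_{\lambda(\varpi),1}$ modulo $\varpi$ really lands inside a \emph{proper} parabolic of $\underline{G}(\F_{q})$ when $\lambda$ is non-central. Once that is in place everything else is a bookkeeping exercise inside the product formula \eqref{product}, enormously simplified by the facts that only central double cosets survive and that centrality trivialises all conjugation, restriction and corestriction maps appearing there.
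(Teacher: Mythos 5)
Your proof is correct and follows essentially the same route as the paper: the key step in both is that $\HH^{>0}(G_{x,y},R)=0$ off the central double cosets, because the reduction of $G_{\lambda(\varpi),1}$ modulo $\varpi$ lands (up to a $p$-group) in a proper parabolic/Levi whose order is invertible by hypothesis, after which the product formula \eqref{product} collapses to cup products in the graded-commutative ring $\HH^{\ast}(K,R)$. Your explicit convolution formula over the cocharacter lattice $\Lambda$ of the connected centre is just a tidier bookkeeping of the paper's computation of $(h_1\ast h_2)(1,z)$ for central $z$, and the parabolic-image assertion you flag is asserted at the same level of detail in the paper itself.
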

\begin{proof}
Let $x,y \in G/K$ with $x\neq zy$ for every $z$ in the centre of $G$.
We claim that $\HH^{i}(G_{x,y},R)=0$ for all $i\geq 1$.
As before we may take $x=t\in T$ and $y=1$ and replace $G_{t1}$ by its image in $\underline{G}(\F_q).$
Modulo a $p$-subgroup this is a proper Levi subgroup of $\underline{G}(\F_q)$ and hence, the claim follows by assumption.
In other words, an element $h\in\mathcal{H}(G)^{\sph}_R$ of degree greater than or equal to one is uniquely characterized by its values $h(1,z)$ with $z$ in the centre of $G$.

Let $z$ be in the centre of $G$.
We have $G_{1,z}=K$ and $h(x,y)=h(zx,zy)$ for every $h\in\mathcal{H}(G)^{\sph}_R$ and $x,y \in G/K$.
Let $h_1$ and $h_2$ be elements of $\mathcal{H}(G)^{\sph}_R$ of degree $i$ respectively $j$.
Since the classical spherical Hecke algebra is commutative we may assume that $i$ or $j$ is greater than or equal to one.
Using the explicit formula \eqref{product} for the product we see:
\begin{align*}
(h_1\ast h_2)(1,z)&=(h_1\ast h_2)(z^{-1},1)\\
&=\sum_{t\in K\backslash G/K} \Cores^{K}_{G_{1,t}}(\Res^{G_{1,t}}_{G_{1,t}} (h_1(z^{-1},t))\cup \Res^{G_{1,t}}_{G_{1,t}} (h_2(t,1)))\\
&=(-1)^{ij}\sum_{t\in K\backslash G/K} \Cores^{K}_{G_{1,t}}(h_2(t,1)\cup h_1(z^{-1},t))\\
&=(-1)^{ij}\sum_{t\in K\backslash G/K} \Cores^{K}_{G_{1,t}}(h_2(t,1)\cup h_1(1,zt))\\
&=(-1)^{ij}\sum_{t\in K\backslash G/K} \Cores^{K}_{G_{1,t}}[t^{-1}]^{\ast}(h_2(1,t^{-1})\cup h_1({t}^{-1},z)).
\end{align*}
Again, the summands for $t$ not in the centre vanish and we conclude
\begin{align*}
&\sum_{t\in K\backslash G/K} \Cores^{K}_{G_{1,t}}[t^{-1}]^{\ast}(h_2(1,t^{-1})\cup h_1({t}^{-1},z))\\
=&\sum_{t\in K\backslash G/K} \Cores^{K}_{G_{1,t}}(h_2(1,t^{-1})\cup h_1({t}^{-1},z))\\
=&(h_2\ast h_1)(1,z).
\end{align*}
\end{proof}

\begin{Rem}
Let us assume that $G$ is semisimple. For $x\in G/K$ we put $K_x=G_{x,1}.$
Fixing a set of representatives $[K\backslash G/K]$ for the left $K$-orbits of $G/K$ with $1\in [K\backslash G/K]$ we have a decomposition
\begin{align}\label{decomp}
\mathcal{H}(G)^{\sph}_R=\bigoplus_{x\in [K\backslash G/K]} \HH^{\ast}(K_x,R)
\end{align}
by \cite{Ve}, Section 2.4.
The proof above shows that under the hypothesis of Proposition \ref{trivial}
we have $$\HH^{\ast}(K_x,R)=\HH^{0}(K_x,R)$$
for all $x\neq 1.$
Moreover, multiplication in $\mathcal{H}(G)^{\sph}_R$ is given in terms of \eqref{decomp} as follows:
\begin{itemize}
\item convolution in degree $0$,
\item cup product on $\HH^{\ast}(K,R)$ and
\item $h_1\ast h_2=h_2\ast h_1=0$ if $h_1\in \HH^{0}(K_x,R)$ for $x\neq 1$ and $h_2\in \HH^{i}(K,R)$ for $i>0.$
\end{itemize}
\end{Rem}

\begin{Cor}\label{cor}
Let $n\in \Z$ be an integer with $(n,2p)=1$.
Then the spherical derived Hecke algebras $\mathcal{H}(\GL_2(F))^{\sph}_{\Z/n\Z}$, $\mathcal{H}(\SL_2(F))^{\sph}_{\Z/n\Z}$ and $\mathcal{H}(\PGL_2(F))^{\sph}_{\Z/n\Z}$ are graded-commutative.
\end{Cor}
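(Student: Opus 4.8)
The plan is to reduce to prime-power coefficients by the Chinese Remainder Theorem and then, for each prime separately, to invoke either Theorem \ref{theorem} or Proposition \ref{trivial} depending on a congruence satisfied by $q$.

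First I would write $n=\prod_{l\mid n}l^{r_{l}}$ and note that the ring isomorphism $\Z/n\Z\cong\prod_{l\mid n}\Z/l^{r_{l}}\Z$ induces, for every profinite group $H$, a natural decomposition $\HH^{\ast}(H,\Z/n\Z)\cong\prod_{l\mid n}\HH^{\ast}(H,\Z/l^{r_{l}}\Z)$ of graded rings, compatible with restriction, corestriction and cup product. Feeding this into the explicit description of the spherical derived Hecke algebra from Section 4 and into the product formula \eqref{product}, one obtains an isomorphism of graded rings
$$\mathcal{H}(G)^{\sph}_{\Z/n\Z}\cong\prod_{l\mid n}\mathcal{H}(G)^{\sph}_{\Z/l^{r_{l}}\Z}.$$
So it is enough to prove that $\mathcal{H}(G)^{\sph}_{\Z/l^{r}\Z}$ is graded-commutative for each of $G\in\{\GL_{2}(F),\SL_{2}(F),\PGL_{2}(F)\}$ and each prime $l\mid n$; by $(n,2p)=1$ such an $l$ is odd and distinct from $p$, and since $q$ is a power of $p$ we have $l\nmid q$.

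Next I would record the two numerical inputs. For each of $\underline{G}\in\{\GL_{2},\SL_{2},\PGL_{2}\}$ the flag variety $\underline{G}/\underline{B}$ is the projective line, so $|\underline{G}(\F_{q})/\underline{B}(\F_{q})|=q+1$; and the only proper parabolic subgroups of $\underline{G}(\F_{q})$ are the conjugates of $\underline{B}(\F_{q})$, whose order is $q(q-1)^{2}$ if $\underline{G}=\GL_{2}$ and $q(q-1)$ if $\underline{G}=\SL_{2}$ or $\PGL_{2}$. Then I would split into two cases. If $l\nmid q+1$, then $|\underline{G}(\F_{q})/\underline{B}(\F_{q})|$ is invertible in $\Z/l^{r}\Z$ and Theorem \ref{theorem} applies. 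If $l\mid q+1$, then $l$ cannot also divide $q-1$ (else $l$ would divide $(q+1)-(q-1)=2$, contradicting that $l$ is odd) and $l\nmid q$, so $l$ is coprime to $q(q-1)^{2}$ and to $q(q-1)$; hence the order of every proper parabolic subgroup of $\underline{G}(\F_{q})$ is invertible in $\Z/l^{r}\Z$ and Proposition \ref{trivial} applies. Either way $\mathcal{H}(G)^{\sph}_{\Z/l^{r}\Z}$ is graded-commutative, and reassembling the product gives the corollary.

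I do not expect a genuine obstacle: the real work is already done in Theorem \ref{theorem} and Proposition \ref{trivial}, and the only delicate point is the elementary observation that an odd prime divides at most one of $q-1$ and $q+1$ — this is exactly what makes the two hypotheses jointly cover every odd prime $l\neq p$. The one thing to verify carefully when setting up the product decomposition is that $\HH^{\ast}(H,\prod_{l}\Z/l^{r_{l}}\Z)\cong\prod_{l}\HH^{\ast}(H,\Z/l^{r_{l}}\Z)$ is an isomorphism of graded rings, natural in $H$, so that it respects the Hecke convolution; this follows at once from additivity of continuous cohomology in the coefficient ring.
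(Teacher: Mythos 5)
Your proposal is correct and follows essentially the same route as the paper: decompose $\Z/n\Z$ into prime-power factors, then for each odd prime $l\neq p$ apply Theorem \ref{theorem} (equivalently Corollary \ref{exponent}) or Proposition \ref{trivial}, using that an odd prime divides at most one of $q-1$ and $q+1$. The only cosmetic difference is that you split on whether $l\mid q+1$ while the paper splits on whether $q\equiv 1\bmod l$; both dichotomies cover all cases in the same way.
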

\begin{proof}
For coprime integers $m,n\geq 1$ we may always decompose
$$\mathcal{H}(G)^{\sph}_{\Z/nm\Z}=\mathcal{H}(G)^{\sph}_{\Z/m\Z}\times \mathcal{H}(G)^{\sph}_{\Z/n\Z}.$$
Thus, it is enough to consider the case $n=l^r$ for a prime $l$ that is coprime to $2p$.

Every parabolic subgroup of $\underline{G}=\GL_2,$ $\SL_2$ or $\PGL_2$ is a Borel subgroup.
We have $|\underline{G}(\F_q)/\underline{B}(\F_q)|=|\mathbb{P}^1(\F_q)|=q+1$ and $|\underline{B}(\F_q)|=q(q-1)^{i}$ with $i=1$ or $i=2$.
Thus, the claim follows either from Corollary \ref{exponent} or Proposition \ref{trivial}.

\end{proof}

\bibliographystyle{abbrv}
\bibliography{bibfile}

\end{document}